\numberwithin{equation}{section}
\newtheorem{theorem}{Theorem}[section]
\newtheorem{lemma}[theorem]{Lemma}
\newtheorem{definition}[theorem]{Definition}
\newtheorem{thm}[theorem]{Theorem}
\newtheorem{lem}[theorem]{Lemma}
 \DeclareMathOperator{\lcm}{lcm}
\def\\{\cr}
\def\({\left(}
\def\){\right)}
\def\[{\left[}
\def\]{\right]}
\def\<{\langle}
\def\>{\rangle}
\def\C{\mathbb{C}}
\def\cD{\mathcal D}
\def\cR{\mathcal R}
\def\cN{\mathcal N}
\def\R{\mathbb{R}}
\def\M{\mathbb M}
\def\F{\mathbb{F}}
\def\Z{\mathbb{Z}}
\def\notdivides{\mathrel{\kern-3pt\not\!\kern3.5pt\bigm|}}
\begin{document}


\title{\Large \textbf{Tuples of polynomials over finite fields with pairwise coprimality conditions}}
\author{%
\scshape{JUAN ARIAS DE REYNA}\\
{Department of Mathematical Analysis, Seville University} \\
{Seville, Spain}\\
\texttt{ arias@us.es}
\and
\scshape {RANDELL HEYMAN}  \\
School of Mathematics and Statistics,\\ University of New South Wales \\
Sydney, Australia\\
\texttt {randell@unsw.edu.au}
}

\maketitle


\begin{abstract}
Let $q$ be a prime power. We estimate the number of tuples of degree bounded monic polynomials $(Q_1,\ldots,Q_v) \in (\mathbb{F}_q[z])^v$ that satisfy given pairwise coprimality conditions. We show how this generalises from monic polynomials in finite fields to Dedekind domains with a finite norm.
\end{abstract}

Keywords: relatively prime, coprime,  polynomials, finite fields, Dedekind domains
\newline

AMS Classification: 11C08

\section{Introduction}

The question of calculating the number of relatively prime polynomials of fixed degree in finite fields arose in \cite [Section 4.6.1, Ex.5]{Knu}. Further results can be found in \cite{Cor,Rei,Arm,Ben,Hou}.

This naturally leads to the concept of tuples of polynomials in finite fields that exhibit pairwise coprimality conditions. This concept is also relevant to polynomial remainder codes used in, amongst other things, error correction (see \cite{Sto} for an early paper). The density of pairwise coprime polynomials in tuples of finite fields can be inferred from a recent paper \cite{Mic}. We improve on this result in two ways. Firstly, we contemplate generalised pairwise coprimality conditions. That is, conditions that require some, not necessarily all, of the pairs of polynomials to be coprime.
Secondly, we give an asymptotic counting formula rather than simply a density.

Our results regarding polynomials in finite fields can be applied to the more general setting of ideals in Dedekind domains. We explain this further in Section \ref{S:PS}.

Our result is heavily based on \cite{Ari}; a paper that estimates tuples of pairwise coprime integers of bounded height.  We use a graph to represent the required primality conditions as follows. Let $G=(V,E)$ be a graph with $v$ vertices and $e$ edges. The set of vertices, $V$, will be given by $V=\{1,\ldots,v\}$ whilst the set of edges of $G$, denoted by $E$, is a subset of the set of pairs of elements of $V$. That is, $E \subseteq \{\{1,2\},\{1,3\},\ldots,\{r,s\},\ldots,\{v-1,v\}\}$. We admit isolated vertices (that is, vertices that are not adjacent to any other vertex). An edge is always of the form $\{r,s\}$ with $r \ne s$ and $\{r,s\}=\{s,r\}$.
Let $$X=\{(Q_1,\ldots,Q_v) \in (\mathbb{F}_q[z])^v: Q_r \text{~monic},1 \le r\le v\}.$$
For each real $x>0$ and any prime power $q$, we define the set of all tuples that satisfy the primality conditions by
$$Y_G(x):=\{(Q_1,\ldots,Q_v)\in X:\deg Q_r\le x,~\gcd(Q_r,Q_s)=1~\text{if}~\{r,s\}\in E\}.$$
We also let $g(x)=|Y_G(x)|$, and denote with $d$ the maximum degree of the vertices
of $G$.
All references to polynomials in $\F_q[z]$ will refer to monic polynomials.
Finally, let $Q_G(z)=1+B_2z^2+\cdots + B_vz^v$ be the polynomials associated to the graph $G$, defined by
\begin{align}
Q_G(z)=\sum_{F \subset E} (-1)^{|F|}z^{|v(F)|},\qquad Q_G^+(z)=\sum_{F\subset E}z^{|v(F)|},
\end{align}
where $|v(F)|$ is the number of non-isolated vertices of graph $F$.

Our main result is as follows.
\begin{thm}
\label{pairwise ff}
For a natural number $n>1$, let $g(n)$ be the cardinality of tuples of monic polynomials
$(Q_1, \dots, Q_v)$ in $\F_q[z]$ of degree $\deg(Q_r)\le n$ satisfying the coprimality
conditions given by the graph $G$ whose vertices have degree $\le d$. Then for
any $0<\varepsilon<\frac12$ we have
\begin{equation}\label{e:main}
g(n)=\frac{\rho_{G,q}}{(q-1)^v}q^{nv}\Bigl(1+ O_{G,q}(n^d q^{-n})
+O_{G,q}(q^{-(1-\varepsilon)n})\Bigr).
\end{equation}  
where
\begin{equation}
\rho_{G,q}=\prod_{\substack{P \in \F_q[z]\\P \,\emph{irreducible}}} Q_G(q^{-\deg(P)}).
\end{equation}
\end{thm}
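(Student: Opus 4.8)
The plan is a multidimensional M\"obius sieve over $\F_q[z]$, following the strategy of \cite{Ari}. Write each indicator as $[\gcd(Q_r,Q_s)=1]=\sum_{D\Div\gcd(Q_r,Q_s)}\mu(D)$, with $\mu$ the M\"obius function of $\F_q[z]$, multiply over all edges, and interchange summations to get
\begin{equation*}
g(n)=\sum_{(D_{rs})_{\{r,s\}\in E}}\Bigl(\prod_{\{r,s\}\in E}\mu(D_{rs})\Bigr)\prod_{r=1}^{v}C\bigl(n-\deg L_r\bigr),
\end{equation*}
where each $D_{rs}$ runs over monic polynomials, $L_r=\lcm\{D_{rs}:\{r,s\}\in E\}$ (with $r$ fixed), and $C(m)$ is the number of monic polynomials of degree $<m$ (so $C(m)=0$ for $m\le0$); only squarefree tuples contribute. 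The key step is to re-index the tuple $(D_{rs})$ by the sets $F_P=\{\{r,s\}\in E:P\Div D_{rs}\}$, one subset of $E$ per irreducible $P$ and almost all empty: then $r\in v(F_P)$ exactly when $P\Div L_r$, whence $\prod_{\{r,s\}}\mu(D_{rs})=\prod_P(-1)^{|F_P|}$ and $\sum_r\deg L_r=\sum_P\deg(P)\,|v(F_P)|$. This is what lets the expected main term factor as the Euler product $\prod_P\sum_{F\subseteq E}(-1)^{|F|}q^{-\deg(P)|v(F)|}=\prod_P Q_G(q^{-\deg P})=\rho_{G,q}$, which converges absolutely because $Q_G(z)=1+O(z^2)$ near $0$ and $\sum_P q^{-2\deg(P)}<\infty$.

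Next I would isolate the main term. On the range where all $\deg L_r<n$ one has $C(n-\deg L_r)=\frac{q^{\,n-\deg L_r}-1}{q-1}=\frac{q^{\,n-\deg L_r}}{q-1}-\frac1{q-1}$; expanding $\prod_r C(n-\deg L_r)$ into $2^v$ pieces, the leading piece $\prod_r\frac{q^{\,n-\deg L_r}}{q-1}$ gives $\frac{q^{nv}}{(q-1)^v}\sum_{(D_{rs})}(\prod\mu)q^{-\sum_r\deg L_r}$, restricted to $\deg L_r<n$ for all $r$. Completing this sum to all $(D_{rs})$ and invoking the re-indexing identifies it with $\frac{\rho_{G,q}}{(q-1)^v}q^{nv}$, the main term of \eqref{e:main}. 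There remain (a) the tail discarded in the completion (tuples with some $\deg L_{r_0}\ge n$) and (b) the $2^v-1$ other pieces, each carrying a factor $-\frac1{q-1}$ at some non-empty set $S\subseteq V$ of coordinates.

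For the tail, on $\{\deg L_{r_0}\ge n\}$ use $1\le q^{-(1-\varepsilon)n}q^{(1-\varepsilon)\deg L_{r_0}}$; after pulling this out, the residual sum $\sum_{(D_{rs})}q^{-\varepsilon\deg L_{r_0}}\prod_{r\ne r_0}q^{-\deg L_r}$ factors over irreducibles with every local exponent of $q^{-\deg P}$ at least $1+\varepsilon$ (the minimum coming from $F_P$ a single edge at $r_0$), hence converges; this bounds the tail by $O_{G,q}(q^{-(1-\varepsilon)n})$ relative to the main term, and is where $\varepsilon>0$ is needed. For (b), a piece attached to $S=\{r_0\}$ is $O_{G,q}(q^{(v-1)n})$ times $\sum_{(D_{rs}):\,\deg L_r<n\ \forall r}\prod_{r\ne r_0}q^{-\deg L_r}$; since $\deg L_r<n$ forces every irreducible in play to have $\deg P<n$, this sum is $\le\prod_{\deg P<n}\bigl(1+\deg(r_0)q^{-\deg P}+O(q^{-2\deg P})\bigr)\ll_q n^{\deg(r_0)}\le n^{d}$ by the $\F_q[z]$ Mertens bound $\sum_{\deg P\le n}q^{-\deg P}=\log n+O(1)$, giving the contribution $O_{G,q}(n^{d}q^{-n})$ after dividing by the main term; the pieces with $|S|\ge2$ are smaller, although when $S$ contains an edge the absolute-value bound fails to close and one must exploit the cancellation in the M\"obius sum (equivalently, reduce to the corresponding count for the subgraph $G-S$). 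Collecting everything gives \eqref{e:main}. The main obstacle is precisely this last point, together with keeping the truncation $C(m)=0$ for $m\le0$ in play throughout (it makes both the sieve sum finite and the residual tail sum convergent). An alternative route that makes the restriction $\varepsilon<\tfrac12$ transparent is to pass to the multivariate generating function $\prod_r(1-qt_r)^{-1}\prod_P P_G(t_1^{\deg P},\dots,t_v^{\deg P})$, with $P_G(u)=\sum_{F\subseteq E}(-1)^{|F|}\prod_{r\in v(F)}u_r$ (so $P_G(z,\dots,z)=Q_G(z)$), and read off $g(n)$ by a multidimensional residue computation: the Euler product converges only on $|t_r|<q^{-1/2}$ (because of the $-u_ru_s$ terms of $P_G$), so when the residue-extraction contours along two adjacent coordinates are pushed out they can only reach radius $q^{-1/2-\eta}$, producing an error of size $q^{-(1-2\eta)n}$ with $\varepsilon=2\eta\in(0,\tfrac12)$.
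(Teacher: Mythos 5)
Your sieve set-up, the re-indexing of the edge labels by the sets $F_P=\{\,\{r,s\}\in E: P\Div D_{rs}\}$, and the Euler-product evaluation of the main coefficient all coincide with the paper's argument (there the sieve is phrased as inclusion--exclusion over edge labelings $(N_1,\dots,N_e)$ with associated vertex labels $M_r=\lcm$ of the labels at $r$, which is the same computation), and your Rankin-type bound for the completion tail is a legitimate, arguably cleaner, alternative to the paper's route through the estimate $\omega(Q)\le 4\frac{n}{\log n}\log q$. The proof nevertheless has a genuine gap, and it is exactly the one you flag yourself: the binomial expansion of $\prod_r C(n-\deg L_r)$ into $2^v$ pieces. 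For a piece whose omitted coordinate set $S$ contains both endpoints of an edge $\{r,s\}\in E$, the variable $D_{rs}$ occurs in no surviving denominator $q^{-\deg L_t}$; the absolute-value sum over such $D_{rs}$ (squarefree, of degree up to roughly $n$) contributes a factor of order $q^{n}$ per such edge, so the piece cannot be closed by taking absolute values. Saying that one ``must exploit the cancellation'' or ``reduce to the subgraph $G-S$'' names the obstacle rather than overcoming it: $D_{rs}$ remains entangled with the other labels through the constraints $\deg L_r<n$ and $\deg L_s<n$, so the M\"obius cancellation is not a clean separate sum, and no estimate is supplied. As written, the error analysis is therefore incomplete whenever $E\neq\emptyset$.

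The device in the paper that removes this obstacle is the telescoping identity of Lemma \ref{L:dos}: writing $w=u+v$ with $|v(q^m)|\le 1$ and $|w|\le u$, one has $\prod_{r}w_r=\prod_r u_r+\sum_{k}\bigl(v_k\prod_{j<k}u_j\prod_{j>k}w_j\bigr)$, so that each error term $R_k$ loses exactly \emph{one} vertex factor $q^{-m_k}$, never two. Since every edge has two endpoints, each edge label $N_a$ still divides some surviving $M_t$ with $t\ne k$, every edge variable is damped by at least $q^{-\deg N_a}$, and the resulting absolute sum is $\ll n^{d_k}$ by exactly the Mertens-type computation you carry out for your $|S|=1$ pieces --- which is then the only case that ever arises. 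Replacing your $2^v$-term expansion by this $v$-term telescoping (or otherwise genuinely extracting the cancellation in the signed sums for $S$ containing an edge) is needed to complete the argument; the concluding generating-function sketch does not substitute for this, as the residue computation and contour shifts are only asserted.
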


The dependence of the  constants in the $O$ symbols on $G$ and $q$ may be given explicitly.
Namely,
\begin{align*}
O_{G,q}(n^d q^{-n})&=\frac{\exp(d) 2^{2^e}(q-1)v}{\rho_{G,q}} O(n^d q^{-n})\\
&\text{and}\\
O_{G,q}(q^{-(1-\varepsilon)n})&=
\frac{3}{\rho_{G,q}}\Bigl(\sum_{j=1}^e \rho_{G'_j,q}^+\Bigr)O(q^{-(1-\varepsilon)n}),
\end{align*}
where the constants in the new $O$ terms  are absolute,
\begin{equation}\label{E:rho}
\rho^+_{G,q}=\prod_PQ_G^+(q^{-\deg(P)})
\end{equation}
and $G'_j$ is the graph obtained from $G$ by removing the edge $j$.

We make some comments about $\rho_{G,q}$. Letting $p(n)$ represent the total number of $v$-tuples of monic polynomials in $\mathbb{F}_q[x]$ of degree less than equal to $n$, we observe that
$$|p(n)|=\(\frac{q^n-1}{q-1}\)^v.$$
Thus, the density of v-tuple of polynomials in $\mathbb{F}_q[x]$ that are monic and have the coprimality conditions induced by graph $G$ is given by
$$\lim_{n \rightarrow \infty}\frac{g(n)}{p(n)}=\rho_{G,q}.$$

The formula for the density, $\rho_{G,q}$, is explicit. In \cite[Section 4]{Ari} we outlined the calculations for the density of 4-tuples of integers with given pairwise coprimality conditions. The calculations to obtain $\rho_{G,q}$ in the case of polynomials in finite fields can be approached in the same way. In fact the calculations to obtain $\rho_{G,q}$ in \eqref{E:rho} are easier. We can group the polynomials by degree, since each polynomial of a given degree contributes in the identical way in the product formula for $\rho_{G,q}$.

\subsection{Notation}

\begin{itemize}[label={}]
\item $\subset$ is used to indicate subset, including the equality case.
\item $\cD$ is a Dedekind domain with the finite norm property.
\item $I(\cD)$ is the set of non-null ideals in $\cD$.
\item $\cR(\cD)$ is the the ring of ideals defined in Section \ref{SS:RD}. Its elements include $Z,Z^+$ and $W$.
\item $X$, $Y$ are sets of tuples of polynomials.
\item $|X|$ is the cardinality of the set $X$.
\item $G=(V,E)$ is a graph with set of vertices $V$ and edges $E$.
\item $Q$ usually runs  over non-null ideals in $\cD$. In the case $\cD=\F_q[z]$, the ring of polynomials with coefficients in
the Galois field $\F_q[z]$, these ideals can be identified with  monic polynomials.
\item $A$, $B$, $R$, $S$ and occasionally other variables denote non-null ideals on $\cD$.
\item $P$ are prime ideals in $\cD$ or monic irreducible polynomials when $\cD=\F_q[z]$. 

\item $\omega(Q)$ denotes the number of distinct primes dividing the ideal $Q$.
Or the number of distinct monic irreducible polynomials dividing the polynomial $Q$ when
$\cD=F_Q[z]$.

\item $\mu(Q)$ denotes $0$ if the ideal $Q$ is divisible by $P^2$ (the square of some
prime ideal)  or $(-1)^{\omega(Q)}$ if $Q$ is squarefree. In the case of monic polynomials we define $\mu(Q)$ analogously.

\item $\omega(q^n)$ denotes the number of monic polynomials of degree less than or equal
to $n$ in $F_q[z]$, (see definition \ref{D:w}).

\item $\cN(Q)$ is the norm of the ideal $Q$, that is, $\cN(Q)=|\cD/Q|$. If $\cD=\F_q[z]$ then $\cN(Q)$ is also equal
to $q^{\deg Q}$.
\item $r$, $s$  are vertices of $G$, so that $1\le r, s\le v$.
\item $\{r,s\}$ is a typical edge of the graph $G$.
\item $d$ denotes the maximum degree of the vertices of $G$.
\item $e$ denotes the number of edges of $G$.
\end{itemize}

\section{Problem setting}\label{S:PS}

The general setting of our problem refers to a Dedekind domain \cite[Chap.~1]{Na}.
We recall that in a Dedekind domain, $\cD$,
every nonzero ideal $Q$ has a unique factorization
 of prime ideals $Q=P_1^{\alpha_1}\cdots P_k^{\alpha_k}$.
We also require that each ideal in $\cD$ has a finite norm. That is,  $\cN(Q):=|\cD/Q|<\infty$.
These Dedekind domains with finite norm property are considered in \cite[p.~11]{Na}.  The
norm is multiplicative $\cN(Q_1Q_2)=\cN(Q_1)\cN(Q_2)$, and for any given positive real number
$x$ the number of ideals $Q$ with $\cN(Q)\le x$ is finite.

Our main example  is the ring of polynomials with coefficients in a Galois
field $\cD=\F_q[z]$. But there is also  another important case when  $\cD$
is the ring of integers of a number field.

For any graph $G=(V,E)$ we seek an estimate of the cardinality of the set
$$G_{\cD}(x)=\{(Q_1,\dots,Q_v)\in I(\cD)^v:  \cN(Q_r)\le x, \gcd(Q_r,Q_s)=1 \text{ if }
\{r,s\}\in E\}.$$

\subsection{The ring of ideals}\label{SS:RD}
As a tool in our reasoning we consider formal sums of the type
$$\sum_{Q\in I(\cD)}\frac{n(Q)}{Q},$$
where the coefficients $n(Q)$ are real (or complex) numbers.  We define  two operations,
sum and product, by the rules
$$\Bigl(\sum_{Q\in I(\cD)}\frac{n(Q)}{Q}\Bigr)+
\Bigl(\sum_{Q\in I(\cD)}\frac{m(Q)}{Q}\Bigr)=\Bigl(\sum_{Q\in I(\cD)}\frac{n(Q)+m(Q)}{Q}\Bigr)$$
and
$$\Bigl(\sum_{Q\in I(\cD)}\frac{n(Q)}{Q}\Bigr)
\Bigl(\sum_{Q\in I(\cD)}\frac{m(Q)}{Q}\Bigr)=\sum_{Q\in I(\cD)}\frac{1}{Q}
\Bigl(\sum_{BC=Q}n(B)m(C)\Bigr).$$
Since there is only a finite set of pairs of ideals with $BC=Q$ the product is well
defined. It is clear that this makes the set of these sums, $\cR(\cD)$, a ring.

A particular element of this ring is $Z$ is given by
$$Z=\sum_{Q}\frac{1}{Q}.$$
The unique factorization of ideals in a Dedekind ring gives us
$$Z=\sum_{Q}\frac{1}{Q}=\prod_P\Bigl(1+\frac{1}{P}+\frac{1}{P^2}+\cdots\Bigr).$$
Here $P$ runs through the prime ideals. To give a meaning to the infinite product
we may consider simply the product topology in $\R^{I(\cD)}$ of the discrete topology in $\R$.

We will write
\begin{equation}\label{E:triangle}
\Bigl(\sum_{Q\in I(\cD)}\frac{n(Q)}{Q}\Bigr) \vartriangleleft
\Bigl(\sum_{Q\in I(\cD)}\frac{m(Q)}{Q}\Bigr)
\end{equation}
to denote that for any ideal $Q$ we have $n(Q)\le m(Q)$.

Observe that for any
$\sigma>0$ we obtain from \eqref{E:triangle} that, when the series converge,
$$\Bigl(\sum_{Q\in I(\cD)}\frac{n(Q)}{\cN(Q)^\sigma}\Bigr) \le
\Bigl(\sum_{Q\in I(\cD)}\frac{m(Q)}{\cN(Q)^\sigma}\Bigr).$$

We will use the notation
$$\cN^\sigma\Bigl(\sum_{Q\in I(\cD)}\frac{n(Q)}{Q}\Bigr)$$ to denote
$$\Bigl(\sum_{Q\in I(\cD)}\frac{n(Q)}{\cN(Q)^\sigma}\Bigr).$$
In particular $\cN^\sigma(Z)$ is the Dedekind zeta function in the case of a
number field. Notice that $\cN^\sigma(XY)=\cN^\sigma(X)\cN^\sigma(Y)$ for any $X$,
$Y\in\cR(\cD)$.

\subsection{The particular case of polynomials in finite fields}
Our reasoning depends heavily on the function $f(x)$ that counts the number of ideals of
$\cD$ having norm $\le x$. This function behaves very differently in the case of polynomials
and for the integers of a number field. So that in spite of our arguments being general
we consider here only the case of $\cD=\F_q[z]$.

Therefore for us $\cD=\F_q[z]$ and in this case each non-null ideal is generated by
a unique monic polynomial. Therefore we speak of ideals or monic polynomials  indistinctly and
use the letter $Q$ to denote them.  The norm of a ideal $Q$ depends only on the degree
of the corresponding polynomial. Specifically, $\cN(Q)=q^r$ if $\deg(Q)=r$. Therefore instead of
consider tuples of polynomials with $\cN(Q)\le x$ we will  consider tuples of polynomials
of degree $\deg(Q)\le n$.

In the rest of the paper we have fixed a graph $G=(V,E)$, a natural number $n$ and a Dedekind
ring $\cD=\F_q[z]$ where $q$ is a fixed prime power.  With these elements we construct
a general set of tuples
\begin{equation}\label{E:defX}
X=X(n)=\{(Q_1,\dots, Q_v)\in \F_q[z]^v\colon \deg(Q_r)\le n, 1\le r\le v\},
\end{equation}
and a set of tuples satisfying the coprimality conditions
\begin{equation}\label{E:defY}
Y=Y_G(n)=\{(Q_1,\dots, Q_v)\in X\colon \gcd(Q_r,Q_s)=1\text{ for any edge } \{r,s\}\in E\}
\end{equation}
We are interested in $g(n)=|Y_G(n)|$.

\section{Exact formula for the number of tuples\newline satisfying the coprimality conditions}

We give here a general formula to compute $g(x)$ in the case where $\cD=\F_q[z]$.
We begin with two definitions.
\begin{definition}
Given the graph $G=(V,E)$, an \emph{edge labeling} is a tuple $(Q_1,\dots,Q_e)$ of non-null
ideals in $\cD$, associating an ideal $Q_a$ to each edge $a\in E$.
Analogously we consider \emph{vertex labelings}. These are associations
$(Q_1,\dots, Q_v)$ of an ideal $Q_j$ for each vertex $j\in V$.
\end{definition}
Frequently (see, for example, the proof of Lemma \ref{P:start}) we start with an edge labeling $(N_1,\dots, N_e)$ and associate to it
a vertex labeling $(M_1,\dots, M_v)$ in the following way.
\begin{definition}\label{D:assoc}
Given an edge labeling $(N_1,\dots, N_e)$ \emph{the associated vertex labeling}
$(M_1,\dots, M_v)$ is defined by
\begin{equation}
M_r=\lcm\{N_a:\text{ the edge $a=\{r,s\}$ joins the vertex $r$ with any other one $s$}\},
\end{equation}
where $\lcm(\emptyset)=1$.
\end{definition}
We introduce some notation for the number of monic polynomials of degree $m$ or less.
\begin{definition}\label{D:w}
Let
$$
w(q^m)=
\begin{cases}
0 &{m<0}, \\
\frac{q^{m+1}-1}{q-1} & {m \ge 0}.
\end{cases}
$$
\end{definition}
Any time we use the notations $N_a$ and $M_j$ we assume that $(M_1,\dots, M_v)$ is
the associated vertex labelling to the edge labeling $(N_1,\dots, N_e)$.

The general formula for $g(x)$ will be an application of the Principle of inclusion-exclusion.

\begin{thm}[Principle of inclusion-exclusion]
Let $X$ be a finite set and let $Y_j$ be subsets of $X$ for $1\le j\le n$.
Then
$$\Bigl|X\smallsetminus \bigcup_{j=1}^n Y_j\Bigr|=\sum_{J\subset\{1,2,\dots, n\}}
(-1)^{|J|}|Y_J|,$$
where $Y_J=X$ for $J=\emptyset$ and $Y_J=\bigcap_{j\in J} Y_j$ for $J\ne\emptyset$.
\end{thm}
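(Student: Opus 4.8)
The plan is to prove the identity by a double-counting (indicator-function) argument, showing that each element $x\in X$ contributes the same amount to both sides. First I would fix $x\in X$ and put $S_x=\{j\colon 1\le j\le n,\ x\in Y_j\}$, the set of indices whose subset contains $x$. On the left-hand side, $x$ is counted with weight $1$ exactly when $x\notin\bigcup_{j=1}^n Y_j$, that is, when $S_x=\emptyset$, and with weight $0$ otherwise.

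Next I would evaluate the contribution of $x$ to the right-hand side. For a subset $J\subset\{1,\dots,n\}$ one has $x\in Y_J$ if and only if $J\subset S_x$; here the convention $Y_\emptyset=X$ is exactly what makes the case $J=\emptyset$ fit this rule. Hence the total weight of $x$ on the right is $\sum_{J\subset S_x}(-1)^{|J|}$, and grouping the subsets of $S_x$ by their size turns this into $\sum_{k=0}^{|S_x|}\binom{|S_x|}{k}(-1)^k=(1-1)^{|S_x|}$ by the binomial theorem. This is $1$ when $S_x=\emptyset$ and $0$ when $S_x\ne\emptyset$, matching the left-hand side for every $x$. Summing over the finite set $X$ (so that all the sums involved are finite) then yields the stated formula.

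As an alternative I could argue by induction on $n$: the base case $n=1$ is $|X\smallsetminus Y_1|=|X|-|Y_1|$, and the inductive step follows from the set identity $|A\smallsetminus(B\cup C)|=|A\smallsetminus B|-|(A\cap C)\smallsetminus(B\cap C)|$ applied with $A=X$, $B=\bigcup_{j=1}^{n-1}Y_j$, $C=Y_n$, using the hypothesis once for the subsets $Y_1,\dots,Y_{n-1}$ of $X$ and once for the subsets $Y_1\cap Y_n,\dots,Y_{n-1}\cap Y_n$ of $Y_n$. I do not anticipate any genuine difficulty here; the only point requiring care is the bookkeeping at the empty set — the conventions $Y_\emptyset=X$ and the $k=0$ term $\binom{0}{0}=1$ — which is precisely what makes the boundary case $S_x=\emptyset$ come out correctly.
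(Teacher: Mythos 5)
Your proof is correct. The paper states this Principle of inclusion-exclusion as a known classical fact and gives no proof of its own, so there is nothing to compare against; your indicator-function argument (fixing $x$, reducing to $\sum_{J\subset S_x}(-1)^{|J|}=(1-1)^{|S_x|}$) is the standard and complete way to establish it, and you handle the one delicate point --- the convention $Y_\emptyset=X$ matching the $J=\emptyset$ term --- correctly. The inductive alternative you sketch is also sound.
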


\begin{lem}
Consider the graph $G=(V,E)$ and the natural number $n$.

Then  $g(n)$, the number of tuples
of polynomials $(Q_1, \dots, Q_v)$ of degree $\le n$ satisfying the coprimality
conditions imposed by $G$, is given by the expression
\begin{equation}\label{E:exact}
g(n)=\sum_{\substack{(N_1,\ldots,N_e)\\\deg(N_e)\le n}}
\mu(N_1)\cdots\mu(N_e)\prod_{r=1}^v w(q^n/q^{\deg(M_r)}).
\end{equation}
\end{lem}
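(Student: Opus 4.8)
The plan is to apply the Principle of inclusion-exclusion with $X=X(n)$ as in~\eqref{E:defX} and, for each edge $a=\{r,s\}\in E$, the ``bad'' set
\[
Y_a=\{(Q_1,\dots,Q_v)\in X\colon \gcd(Q_r,Q_s)\neq 1\},
\]
so that $Y_G(n)=X\smallsetminus\bigcup_{a\in E}Y_a$. For a subset $F\subset E$ the intersection $Y_F=\bigcap_{a\in F}Y_a$ consists of tuples for which every edge in $F$ has its two endpoint polynomials sharing a common (irreducible) factor; this ``common factor'' condition per edge is exactly what the M\"obius function detects. Indeed, for each edge $a$ write $\sum_{N_a\mid\gcd(Q_r,Q_s)}\mu(N_a)$, which is $1$ if $\gcd(Q_r,Q_s)=1$ and $0$ otherwise. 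Multiplying these indicator expressions over all $a\in E$ and summing over $X$ gives
\[
g(n)=\sum_{(Q_1,\dots,Q_v)\in X}\ \prod_{a\in E}\Bigl(\sum_{N_a\mid\gcd(Q_r,Q_s)}\mu(N_a)\Bigr)
=\sum_{(N_1,\dots,N_e)}\mu(N_1)\cdots\mu(N_e)\,\#\{(Q_1,\dots,Q_v)\in X\colon N_a\mid\gcd(Q_r,Q_s)\ \forall a\}.
\]
(One should note that swapping the sum over $X$ with the product/sum over edges is justified because, for fixed $(Q_1,\dots,Q_v)$, only divisors $N_a$ of $\gcd(Q_r,Q_s)$ contribute, so all inner sums are finite; equivalently this is the inclusion-exclusion expansion with $Y_F$ reorganised by the squarefree kernels $N_a$.)

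The second step is to evaluate the inner count. The simultaneous divisibility conditions $N_a\mid Q_r$ for every edge $a$ incident to the vertex $r$ are equivalent to the single condition $M_r\mid Q_r$, where $M_r=\lcm\{N_a: a\ni r\}$ is precisely the associated vertex labeling of Definition~\ref{D:assoc}. Since the conditions on distinct coordinates $Q_1,\dots,Q_v$ are independent, the count factors as $\prod_{r=1}^v\#\{Q_r\in\F_q[z]\text{ monic}:\deg Q_r\le n,\ M_r\mid Q_r\}$. Writing $Q_r=M_r\cdot T_r$ with $T_r$ monic, the condition $\deg Q_r\le n$ becomes $\deg T_r\le n-\deg M_r$, and by Definition~\ref{D:w} the number of such $T_r$ is exactly $w(q^{\,n-\deg M_r})=w(q^n/q^{\deg M_r})$ (this correctly yields $0$ when $\deg M_r>n$). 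Substituting gives~\eqref{E:exact}. Finally, the outer sum may be restricted to tuples with $\deg(N_e)\le n$ — and implicitly $\deg(N_a)\le n$ for all $a$ — since any $N_a$ with $\deg N_a>n$ forces $\deg M_r>n$ for an endpoint $r$, making the product vanish; alternatively, $\mu$ restricts to squarefree $N_a$, but boundedness is what makes the sum finite and is recorded in the display.

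The one point that needs care — and is really the only obstacle — is the interchange in the first step: one must check that expanding $\prod_{a\in E}\bigl(\sum_{N_a\mid\gcd(Q_r,Q_s)}\mu(N_a)\bigr)$ and then summing over $(Q_1,\dots,Q_v)\in X$ produces a legitimately (absolutely) convergent rearrangement equal to $\sum_{(N_1,\dots,N_e)}\mu(N_1)\cdots\mu(N_e)(\cdots)$. This is immediate once one observes that for each fixed tuple in the finite set $X$ there are only finitely many edge labelings $(N_1,\dots,N_e)$ with every $N_a\mid\gcd(Q_r,Q_s)$ contributing, so the double sum is a finite sum of finite sums; reorganising it by grouping on the value of $(N_1,\dots,N_e)$ is then purely formal. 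Everything else — the reduction to $M_r\mid Q_r$, the factorisation over coordinates, and the counting via $w$ — is routine.
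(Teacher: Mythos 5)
Your proof is correct, and it reaches \eqref{E:exact} by a slightly different route than the paper. The paper applies the inclusion--exclusion principle to the family of sets $Y_{(P,e)}$ indexed by pairs (irreducible monic polynomial $P$ of degree $\le n$, edge $e$), and then must argue that subsets $J$ of this index set correspond bijectively to squarefree edge labelings $(N_1,\dots,N_e)$ with $N_a=\lcm\{P:(P,a)\in J\}$, and that $(-1)^{|J|}=\mu(N_1)\cdots\mu(N_e)$ because $|J|=\sum_a\omega(N_a)$. You instead insert the M\"obius indicator $\sum_{N_a\mid\gcd(Q_r,Q_s)}\mu(N_a)$ for each edge and expand the product, which lands directly on the sum over edge labelings and makes the bookkeeping of squarefreeness and of the sign automatic (non-squarefree $N_a$ are killed by $\mu$, and the interchange of sums is trivially justified since everything in sight is finite). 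The two arguments are of course the same inclusion--exclusion in different clothing, but yours avoids the bijection step entirely, while the paper's version makes explicit \emph{why} the relevant $N_a$ and $M_r$ are squarefree, a fact it reuses later. Your evaluation of the inner count --- reduction of the incidence conditions to $M_r\mid Q_r$, factorisation over coordinates, and the count $w(q^n/q^{\deg M_r})$ via $Q_r=M_rT_r$ --- matches the paper's, as does the observation that terms with some $\deg N_a>n$ vanish because the corresponding $w$-factor is zero.
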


\begin{proof}
We  have a graph $G=(V,E)$ as described in Section \ref{S:PS}. We start
with the set of all tuples of polynomials
$X$ defined in \eqref{E:defX}
We want to define subsets $Y_j$ of $X$ so that the difference
$$X\smallsetminus \bigcup_{j=1}^n Y_j$$
is the set $Y$ defined in \eqref{E:defY}, so that $g(n)=|Y|$.

If an element $(Q_1, \dots, Q_v)\in X$ is not in $Y$ there is an edge $e=\{r,s\}$ in
$G$ such that
$\gcd(Q_r, Q_s)\ne1$. Then there is an irreducible monic polynomial $P$ such that
$P\mid Q_r$ and
$P\mid Q_s$. Obviously this polynomial $P$ will be of degree less than or equal to $n$.

Therefore we define a set $Y_{(P,e)}$ for any irreducible polynomial $P$ of degree $\le n$
and any edge $e=\{r,s\}$  of $G$ by
$$Y_{(P,e)}=\{(Q_1, \dots, Q_v)\in X: P\mid Q_r \text{ and } P\mid Q_s\}.$$
With these definitions it is clear that
$$Y=X\smallsetminus \bigcup_{P, e}Y_{(P,e)}.$$

To apply the principle of inclusion-exclusion we consider the intersection
$$Y_J=\bigcap_{j=1}^m Y_{(P_j,e_j)}, \qquad J=\{(P_1,e_1),\dots, (P_m,e_m)\}$$
of a finite set of subsets $Y_{(P,e)}$.

A tuple $(Q_1,\dots, Q_v)\in X$ is in the intersection $Y_J$ if and only if
$P_j\mid Q_{r_j}$ and $P_j\mid Q_{s_j}$ for any of the edges $e_j=\{r_j,s_j\}$.
For any edge $e$ let  $N_e=\lcm\{P\colon (P,e)\in J\}$ with $N_e=1$ if the
set $\{P\colon (P,e)\in J\}$ is empty. In this way, given $J$,  we have associated  a
monic  polynomial to any edge in $E$. With this notation it is obvious that
a tuple $(Q_1,\dots, Q_v)\in X$ is in the intersection $Y_J$ if and only if
$N_e\mid Q_r$ and $N_e\mid Q_s$ for any edge $e=\{r,s\}\in E$.
Notice that the polynomials $N_e$ associated in this way to a given $J$ are
squarefree, because they are the least common multiple of a set of
irreducible polynomials. We note that $J$ determines the monic
squarefree polynomials 
$N_1$,\dots, $N_e$; one for each edge in $E$ whose factors are all of degree less than
or equal $n$. Conversely, the monic
squarefree polynomials
$N_1$,\dots, $N_e$, one for each edge in $e$ whose factors are all of degree less than
or equal $n$, determines $J$.

Looking at it in another way the polynomial $Q_r$ associated to a given
vertex $r$ needs to be divisible by
$N_{e_1}$,\dots, $N_{e_\ell}$ if $e_j$ are the edges joining the vertex $r$ with some other (it maybe none, $\ell=0$ when the vertex is isolated).
The condition on $Q_r$ is equivalent to $M_r\mid Q_r$, where
$M_r=\lcm(N_{e_1},\dots, N_{e_\ell})$ (taking $\lcm(\emptyset)=1$, in the case where
there is no condition on $Q_r$).
Thus, associated to the given finite set $J$ of pairs $(P,e)$ we have associated
a tuple of polynomials $(M_1,\dots, M_v)$ such that
$$Y_J=\bigcap_{j=1}^m Y_{(P_j,e_j)}=
\{(Q_1,\dots, Q_v)\in X\colon M_r\mid Q_r, 1\le r\le v\}.$$
Notice that the polynomials $M_r$ associated in this way to a given $J$ are
squarefree because they are the least common multiple of a set of squarefree polynomials.

The tuple of monic polynomials $(Q_1,\dots, Q_v)$ is in $Y_J$ if each component
$Q_r$ satisfies two conditions. Firstly, $\deg(Q_r)\le n$ for $Q_r$ to be in $X$, and secondly $M_r\mid Q_r$ for $Q_r$ to be
in $Y_J$. These conditions will be satisfied by any product $M_rA$  of $M_r$
with any monic
polynomial $A$ such that $\deg(M_r)+\deg(A)\le n$. Therefore if $\deg(M_r)>n$, there
is no possible $Q_r$. When $\deg(M_r)\le n$ let $m=n-\deg(M_r)$. Then $A$ can be any
monic polynomial of degree $\le m$. The number of possible polynomials $A$,
and therefore the number
of possible $Q_r$, are
$$1+q+q^2+\cdots +q^m=\frac{q^{m+1}-1}{q-1}.$$
With these notation the cardinality of $Y_J$ can be computed as
$$|Y_J|=\prod_{r=1}^v w(q^n/q^{\deg(M_r)}),$$
where the $w$ function is as shown in Definition \ref{D:w}.
We now compute $|J|$. This is the total number of prime factor across all the $N_j$.
As mentioned before $N_j$ is squarefree, so
$$(-1)^{|J|}=(-1)^{\sum_{j=1}^e \omega(N_j)}=\mu(N_1)\cdots\mu(N_e).$$

Therefore the Principle of inclusion-exclusion yields
$$g(n)=|Y|=\sum_{N_1}\cdots\sum_{N_e}\mu(N_1)\cdots\mu(N_e)\prod_{r=1}^v w(q^n/q^{\deg(M_r)}),$$
where the summations are over all monic squarefree polynomials $N_j$
with irreducible factors of degree $\le n$. We notice that if
some $N_j$ have an irreducible factor of degree $>n$, then some $M_r$ will have
a degree $>n$ and the corresponding sum will then be zero because the factor
$w(q^n/q^{\deg(M_r)})$ will equal zero. Also, if some $N_j$ is not squarefree the factor $\mu(N_j)=0$.

Therefore the sum with the restricted conditions on $N_j$ will be the same as the sum
extended on all polynomials. In fact we may restrict the summation to the $N_j$ of
degree $\le n$, because otherwise there is a factor $w(q^n/q^{\deg(M_r)})=0$ in
the corresponding term. This proves equation \eqref{E:exact}.
\end{proof}

\section{Main part of the asymptotic formula}
We now establish the asymptotic formula in the main theorem.
We begin by establishing the main term and the error term of
$g(n)$. 
The main part of
$w(q^m)$ is $u(q^m):=\frac{q^{m+1}}{q-1}$. We define the error term $v(q^m)$ as
the difference, so that for any integer $m$ we have
\begin{equation}
w(q^m)=u(q^m)+v(q^m).
\end{equation}
We will need the following properties of $u(q^m)$ and $v(q^m)$:
\begin{lem}\label{L:uno}
\begin{itemize}
\item[(a)] For all integers $m\in\Z$ we have $|v(q^m)|\le 1$.
\item[(b)] For all integers $m\in\Z$ we have $|w(q^m)|\le u(q^m)$.
\end{itemize}
\end{lem}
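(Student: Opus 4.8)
The plan is to unwind the definitions and split into the two cases $m\ge 0$ and $m<0$ dictated by Definition~\ref{D:w}, since $w(q^m)$ is given by different formulas in each range. Recall that $v(q^m)=w(q^m)-u(q^m)$ and that $q\ge 2$, so $\frac1{q-1}\le 1$; this last inequality is the only quantitative input needed.

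For part (a), I would first treat $m\ge 0$. Here $w(q^m)=\frac{q^{m+1}-1}{q-1}$ and $u(q^m)=\frac{q^{m+1}}{q-1}$, so the difference telescopes to $v(q^m)=-\frac1{q-1}$, whence $|v(q^m)|=\frac1{q-1}\le 1$. For $m<0$ we have $w(q^m)=0$, so $v(q^m)=-\frac{q^{m+1}}{q-1}$; since $m\le -1$ forces $m+1\le 0$ and hence $q^{m+1}\le 1$, we get $|v(q^m)|\le\frac1{q-1}\le 1$. Combining the two ranges gives the claim for all $m\in\Z$.

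For part (b), the cleanest route is to note that $u(q^m)=\frac{q^{m+1}}{q-1}>0$ for every $m$, and to check $0\le w(q^m)\le u(q^m)$ directly. When $m<0$ this is immediate because $w(q^m)=0$. When $m\ge 0$, $w(q^m)=\frac{q^{m+1}-1}{q-1}$ is nonnegative and is strictly smaller than $\frac{q^{m+1}}{q-1}=u(q^m)$; since $w(q^m)\ge 0$ we may drop the absolute value and conclude $|w(q^m)|=w(q^m)\le u(q^m)$. Alternatively one can derive (b) from (a) via the triangle inequality $|w(q^m)|\le u(q^m)+|v(q^m)|$, but that wastes the sharpness and needs the extra observation that $w(q^m)$ and $u(q^m)$ have the same sign, so the direct computation is preferable.

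There is no real obstacle here: the statement is a two-line case analysis, and the only point requiring a moment's care is the bookkeeping of the exponent $m+1$ in the regime $m<0$ (in particular the boundary value $m=-1$, where $q^{m+1}=1$), which is exactly where the bound $\frac1{q-1}\le1$ is used.
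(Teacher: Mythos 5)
Your proof is correct and follows essentially the same route as the paper: the same case split on $m\ge 0$ versus $m<0$, the same explicit computation of $v(q^m)$ in each range, and the same use of $\frac{1}{q-1}\le 1$ for $q\ge 2$. Nothing further is needed.
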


\begin{proof}
For $m<0$ we have $w(q^m)=0$ and therefore $u(q^m)=\frac{q^{m+1}}{q-1}=-v(q^m)$.
Since $m+1\le0$, we have $|v(q^m)|=\frac{q^{m+1}}{q-1}\le \frac{1}{q-1}\le 1$.
Also, in the case $m<0$ we have $|w(q^m)|=0\le u(q^m)$.

For $m\ge0$ we have $w(q^m)=\frac{q^{m+1}-1}{q-1}$, $u(q^m)=\frac{q^{m+1}}{q-1}$ and
therefore $v(q^m)=-\frac{1}{q-1}$. So that $|v(q^m)|=\frac{1}{q-1}\le 1$.  On the other
hand $|w(q^m)|=w(q^m)=\frac{q^{m+1}-1}{q-1}< u(q^m)$.
\end{proof}

To separate the main part in the sum \eqref{E:exact} we will use the following
simple lemma, easily proved by induction.
\begin{lem}\label{L:dos}
Let $w_r=u_r+v_r$ for $1\le r\le R$ be elements of any ring, then \begin{equation} \prod_{r=1}^R w_r=\prod_{r=1}^R u_r+\sum_{s=1}^R \Bigl(v_s\cdot \prod_{j=1}^{s-1}u_j \cdot \prod_{j=s+1}^R w_j\Bigr), \end{equation} where the empty products are equal to $1$.
\end{lem}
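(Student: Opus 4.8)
The plan is to argue by induction on $R$, which is the most transparent route for an identity of this telescoping shape. For the base case $R=1$ the right-hand side reads $u_1 + v_1$ (both empty products being $1$, and the sum consisting of the single term $s=1$), which equals $w_1$ by hypothesis, so there is nothing to prove.

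For the inductive step I would assume the identity for $R-1$ factors and multiply through on the right by $w_R = u_R + v_R$. Writing $\prod_{r=1}^R w_r = \bigl(\prod_{r=1}^{R-1} w_r\bigr) w_R$ and substituting the inductive expression for $\prod_{r=1}^{R-1} w_r$, one obtains three pieces: first $\bigl(\prod_{r=1}^{R-1} u_r\bigr) u_R$, which is exactly $\prod_{r=1}^R u_r$; second $\bigl(\prod_{r=1}^{R-1} u_r\bigr) v_R$, which is precisely the $s=R$ summand on the right-hand side (the product $\prod_{j=R+1}^R w_j$ being empty); and third $\sum_{s=1}^{R-1}\bigl(v_s \prod_{j=1}^{s-1} u_j \prod_{j=s+1}^{R-1} w_j\bigr) w_R$. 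In the last piece the tail $\bigl(\prod_{j=s+1}^{R-1} w_j\bigr) w_R$ is just $\prod_{j=s+1}^R w_j$, so this piece equals $\sum_{s=1}^{R-1} v_s \prod_{j=1}^{s-1} u_j \prod_{j=s+1}^R w_j$. Adding the $s=R$ term from the second piece extends the range of summation to $1 \le s \le R$, and collecting the three pieces yields the claimed formula.

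The only point requiring a word of care is the ordering of factors within each summand: the substitution naturally produces $\prod_{j=1}^{s-1} u_j \cdot v_s \cdot \prod_{j=s+1}^R w_j$, with $v_s$ in the middle, whereas the statement writes $v_s$ in front. These coincide whenever the factors commute, which holds in every setting where we apply the lemma (the coefficients lie in a commutative ring, e.g.\ $\R$ or $\cR(\cD)$); alternatively one may simply read the lemma with $v_s$ placed between the two products. I do not expect any genuine obstacle here: once the conventions for empty products and the index ranges are fixed, the identity is a one-line telescoping, and that bookkeeping is essentially the whole content of the proof.
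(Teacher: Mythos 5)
Your induction is correct and is exactly the argument the paper has in mind (the paper simply asserts the lemma is ``easily proved by induction'' without writing it out). Your side remark about the placement of $v_s$ in a non-commutative ring is a fair observation, and your resolution --- that all applications here are in commutative rings --- is the right one.
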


We can now establish the following.
\begin{lem}\label{P:start}
The number $g(n)$, of tuples $(Q_1,\dots, Q_v)$  with $\deg (Q_a)\le n$ satisfying
the conditions of coprimality given by the graph $G$, is given by
\begin{equation}\label{E:first}
g(n)=\Bigl(\frac{q^n}{q-1}\Bigr)^{v}\sum_{\substack{(N_1,\dots,N_e)\\\deg(N_a)\le n}}
\frac{\mu(N_1)\cdots\mu(N_e)}{q^{m_1}\cdots q^{m_v}}+\sum_{k=1}^v R_k,
\end{equation}
where $m_r=\deg(M_r)$.

The error terms $R_k$ are bounded by
\begin{equation}\label{E:boundR}
|R_k|\le \Bigl(\frac{q^n}{q-1}\Bigr)^{v-1}
\sum_{\substack{(N_1,\dots,N_e)\\\deg(N_a)\le n}}
\frac{|\mu(N_1)\cdots\mu(N_e)|}{q^{m_1}\cdots \widehat{q^{m_k}}\cdots q^{m_v}},
\end{equation}
where $\widehat{q^{m_k}}$ indicates that this factor is omitted.
\end{lem}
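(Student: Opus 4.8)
The plan is to start from the exact formula \eqref{E:exact}, which expresses $g(n)$ as a sum over edge labelings $(N_1,\dots,N_e)$ of degree $\le n$ of $\mu(N_1)\cdots\mu(N_e)\prod_{r=1}^v w(q^n/q^{\deg M_r})$, and to split each factor $w(q^{n-m_r})$ into its main part $u(q^{n-m_r}) = q^{n-m_r+1}/(q-1)$ and error part $v(q^{n-m_r})$ using the decomposition introduced just before the statement. Applying Lemma \ref{L:dos} with $R=v$, $w_r = w(q^{n-m_r})$, $u_r = u(q^{n-m_r})$, $v_r = v(q^{n-m_r})$ turns the product $\prod_{r=1}^v w(q^{n-m_r})$ into $\prod_{r=1}^v u(q^{n-m_r})$ plus a sum of $v$ terms, the $k$-th of which is $v_k \cdot \prod_{j<k} u_j \cdot \prod_{j>k} w_j$. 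I then define $R_k$ to be the contribution to $g(n)$ coming from the $k$-th of these terms: that is, $R_k = \sum_{(N_1,\dots,N_e),\,\deg N_a\le n} \mu(N_1)\cdots\mu(N_e)\, v(q^{n-m_k})\prod_{j=1}^{k-1}u(q^{n-m_j})\prod_{j=k+1}^v w(q^{n-m_j})$.

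Next I would simplify the main term. Since $u(q^{n-m_r}) = q^{n-m_r+1}/(q-1)$, we have $\prod_{r=1}^v u(q^{n-m_r}) = \bigl(q/(q-1)\bigr)^v q^{vn} q^{-(m_1+\cdots+m_v)}$; wait — more carefully, $\prod_{r=1}^v u(q^{n-m_r}) = \prod_{r=1}^v \frac{q^{n-m_r+1}}{q-1} = \frac{q^{v}}{(q-1)^v} q^{vn}\, q^{-(m_1+\cdots+m_v)}$. Hmm, this gives an extra $q^v$ compared to the claimed $(q^n/(q-1))^v$, so in fact I should write $u(q^{n-m_r}) = \frac{q^{n-m_r+1}}{q-1}$ and absorb: $\prod_r u(q^{n-m_r}) = \bigl(\tfrac{q^{n}}{q-1}\bigr)^v \prod_r q^{1-m_r}$, and since $\prod_r q^{1-m_r}=q^v\prod_r q^{-m_r}$... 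I will reconcile the bookkeeping so that the stated factor $\bigl(\tfrac{q^n}{q-1}\bigr)^v \sum \mu(N_1)\cdots\mu(N_e)/(q^{m_1}\cdots q^{m_v})$ emerges; concretely I expect that the intended convention has $u(q^m):=q^{m}/(q-1)$ rather than $q^{m+1}/(q-1)$, or equivalently the factor $q^v$ is folded into a redefinition, and I will simply follow whichever normalization makes \eqref{E:first} literally correct, noting it explicitly. Substituting this into the expansion and interchanging the (finite) sums gives exactly the first term on the right of \eqref{E:first} plus $\sum_{k=1}^v R_k$, which is the main identity.

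For the bound \eqref{E:boundR} on $R_k$, I would apply the triangle inequality to the defining sum for $R_k$: $|R_k| \le \sum_{(N_1,\dots,N_e),\,\deg N_a\le n} |\mu(N_1)\cdots\mu(N_e)|\,|v(q^{n-m_k})|\prod_{j<k}u(q^{n-m_j})\prod_{j>k}|w(q^{n-m_j})|$. Then Lemma \ref{L:uno}(a) gives $|v(q^{n-m_k})|\le 1$, so that factor is dropped entirely, and Lemma \ref{L:uno}(b) gives $|w(q^{n-m_j})|\le u(q^{n-m_j})$ for each $j>k$. Hence every surviving $u$ or $|w|$ factor is bounded by $u(q^{n-m_j}) = q^{n-m_j+1}/(q-1)$ (for the $v-1$ indices $j\ne k$), and collecting these $v-1$ factors produces $\bigl(\tfrac{q^n}{q-1}\bigr)^{v-1}$ times $\prod_{j\ne k} q^{-m_j}$, i.e. $q^{m_1}\cdots\widehat{q^{m_k}}\cdots q^{m_v}$ in the denominator — again up to the same harmless $q^{v-1}$ normalization issue, which I will handle consistently with the main term. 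This yields \eqref{E:boundR}.

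The only genuine subtlety — not really an obstacle but the place to be careful — is the index bookkeeping in two respects: first, the power-of-$q$ normalization just discussed (making sure the prefactors come out as literally $(q^n/(q-1))^v$ and $(q^n/(q-1))^{v-1}$), and second, justifying the interchange of summations, which is trivial here because for fixed $n$ all sums are finite (only finitely many squarefree $N_a$ of degree $\le n$ contribute, the rest killed by $\mu$ or by $w(\cdot)=0$, exactly as argued in the proof of the exact formula). Everything else is a direct substitution of Lemma \ref{L:dos} followed by Lemma \ref{L:uno}.
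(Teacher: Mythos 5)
Your proposal is correct and follows essentially the same route as the paper: apply Lemma \ref{L:dos} to the exact formula \eqref{E:exact} with the decomposition $w=u+v$, read off the main term and the $v$ error terms $R_k$, and bound the latter via the triangle inequality together with Lemma \ref{L:uno}. The normalization discrepancy you flag is genuine --- the paper defines $u(q^m)=q^{m+1}/(q-1)$ but its proof silently substitutes $u(q^m)=q^m/(q-1)$ to make the prefactor come out as $(q^n/(q-1))^v$, whereas the definition actually yields $(q^{n+1}/(q-1))^v$ and $(q^{n+1}/(q-1))^{v-1}$ (as a check, for $v=1$ with no edges $g(n)=(q^{n+1}-1)/(q-1)$, which is \emph{not} within $1$ of $q^n/(q-1)$), so your decision to note and reconcile the convention explicitly is the right call.
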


\begin{proof}
Applying Lemma \ref{L:dos} to the exact expression of $g(n)$ in \eqref{E:exact} and using the
decomposition $w(q^m)=u(q^m)+v(q^m)$ we obtain, with $m_r:=\deg M_r$,
\begin{align}\label{E:sep}
g(n)&=\sum_{\substack{(N_1,\ldots,N_e)\\\deg(N_a)\le n}}\mu(N_1)\cdots\mu(N_e)\prod_{r=1}^v u\Bigl(\frac{q^n}{q^{m_{r}}}\Bigr)\notag\\
&+\sum_{\substack{(N_1,\ldots,N_e)\\\deg(N_a)\le n}}\mu(N_1)\cdots\mu(N_e)
v\Bigl(\frac{q^n}{q^{m_1}}\Bigr)\prod_{r=2}^v
w\Bigl(\frac{q^n}{q^{m_r}}\Bigr)\notag\\
&+\sum_{\substack{(N_1,\ldots,N_e)\\\deg(N_a) \le n}}\mu(N_1)\cdots\mu(N_e)
u\Bigl(\frac{q^n}{q^{m_1}}\Bigr)v\Bigl(\frac{q^n}{q^{m_2}}\Bigr)
\prod_{r=3}^v w\Bigl(\frac{q^n}{q^{m_r}}\Bigr)\notag\\
&\cdots\notag\\
&+\sum_{\substack{(N_1,\ldots,N_e)\\\deg(N_a) \le n}}\mu(N_1)\cdots\mu(N_e)
u\Bigl(\frac{q^n}{q^{m_1}}\Bigr)\cdots v\Bigl(\frac{q^n}{q^{m_{v-1}}}\Bigr)
w\Bigl(\frac{q^n}{q^{m_v}}\Bigr)\notag\\
&+\sum_{\substack{(N_1,\ldots,N_e)\\\deg(N_a) \le n}}\mu(N_1)\cdots\mu(N_e)
u\Bigl(\frac{q^n}{q^{m_1}}\Bigr)\cdots u\Bigl(\frac{q^n}{q^{m_{v-1}}}\Bigr)
v\Bigl(\frac{q^n}{q^{m_v}}\Bigr)\notag\\
&=\sum_{\substack{(N_1,\ldots,N_e)\\\deg(N_a) \le n}}\mu(N_1)\cdots\mu(N_e)\prod_{r=1}^v u\Bigl(\frac{q^n}{q^{m_r}}\Bigr)+
\sum_{k=1}^vR_k,
\end{align}
where for $1 \le k \le v$,
\begin{multline*}
R_k=\sum_{\substack{(N_1,\ldots,N_e)\\\deg(N_a) \le n}}\mu(N_1)\cdots\mu(N_e)
\cdot \\
u\Bigl(\frac{q^n}{q^{m_1}}\Bigr)\cdots u\Bigl(\frac{q^n}{q^{m_{k-1}}} \Bigr)
v\Bigl(\frac{q^n}{q^{m_{k}}}\Bigr)
w\Bigl(\frac{q^n}{q^{m_{k+1}}}\Bigr)\cdots
w\Bigl(\frac{q^n}{q^{m_v}}\Bigr).
\end{multline*}

Since $u(q^m)=\frac{q^m}{q-1}$  the main term can be written as
\begin{multline*}
\sum_{\substack{(N_1,\ldots,N_e)\\\deg(N_a) \le n}}\mu(N_1)\cdots\mu(N_e)\prod_{r=1}^v u\Bigl(\frac{q^n}{q^{m_r}}\Bigr)\\
=\Bigl(\frac{q^n}{q-1}\Bigr)^v
\sum_{\substack{(N_1,\dots,N_e)\\\deg(N_a)\le n}}
\frac{\mu(N_1)\cdots\mu(N_e)}{q^{m_1+\cdots+m_v}}
\end{multline*}
On the other hand, thanks to Lemma \ref{L:uno}, the error term can be bounded
by
$$|R_k|\le \sum_{\substack{(N_1,\dots,N_e)\\\deg(N_a)\le n}}
|\mu(N_1)\cdots\mu(N_e)|\prod_{\substack{1\le r\le v\\r\ne k}}
\Bigl|u\Bigl(\frac{q^n}{q^{m_r}}\Bigr)\Bigr|$$
The factor $k$ is missing because $|v(q^m)|\le1$. Therefore we obtain
$$|R_k|\le \Bigl(\frac{q^n}{q-1}\Bigr)^{v-1}
\sum_{\substack{(N_1,\dots,N_e)\\\deg(N_a)\le n}}
\frac{|\mu(N_1)\cdots\mu(N_e)|}{q^{m_1}\cdots \widehat{q^{m_k}}\cdots q^{m_v}},$$
which completes the proof.
\end{proof}

\section{The coefficient of the main term}
We consider now the coefficient
$$\sum_{\substack{(N_1,\dots,N_e)\\\deg(N_a)\le n}}
\frac{\mu(N_1)\cdots\mu(N_e)}{q^{m_1}\cdots q^{m_v}}$$
of our first expression \eqref{E:first} for $g(n)$.  It is clear that the number
of terms added increases with $n$.  We will see that the sum has a limit when $n\to\infty$.
We require some results regarding certain multiplicative functions.

\subsection{Multiplicative functions in Dedekind domains}
In this section we assume, given a Dedekind domain $\cD$ with the finite norm property,
that $I(\cD)$ is the set of non-null ideals in $\cD$ and $\cR(\cD)$ is the ring of ideals
defined in Section \ref{SS:RD}.  Also $G=(V,E)$ is a given finite graph as in
Section \ref{S:PS}.

\begin{definition}
A function $f\colon I(\cD)\to\C$ defined on the set of non-null ideals, is
\emph{multiplicative} if for any relatively prime pair of ideals $Q$, $R$ we
have $f(QR)=f(Q)f(R)$.
\end{definition}

One important example is the M\" obius function $\mu(Q)=(-1)^{\omega(Q)}$ when
$Q$ is the product of
$\omega(Q)$  different primes ideals, and $\mu(Q)=0$ if there is a prime $P$
with $P^2\mid Q$.  An ideal $Q$ with $|\mu(Q)|=1$ is called \emph{squarefree}.
It is well known that
\begin{lem}[Euler product]\label{P:Euler}
Let $f\colon I(\cD)\to\C$ be a multiplicative function. We have the identity
\begin{equation}
\sum_{Q}\frac{f(Q)}{Q}=\prod_P\Bigl(1+\frac{f(P)}{P}+\frac{f(P^2)}{P^2}+\cdots\Bigr).
\end{equation}
\end{lem}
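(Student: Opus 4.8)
The plan is to run the classical Euler-product argument inside the formal ring $\cR(\cD)$, using unique factorization of ideals and taking a limit in the product topology already introduced in Section \ref{SS:RD}. First I would make sense of each factor on the right-hand side: for a fixed prime ideal $P$ the expression $1+\frac{f(P)}{P}+\frac{f(P^2)}{P^2}+\cdots$ is by definition the element $\sum_{k\ge 0}\frac{f(P^k)}{P^k}$ of $\cR(\cD)$, where $P^0=\cD$ is the unit ideal; this is a legitimate element since formal sums in $\cR(\cD)$ are allowed arbitrary coefficients. I would also record that we may assume $f(\cD)=1$: this is forced whenever $f$ is not identically zero, by taking an ideal $Q$ with $f(Q)\ne 0$ in $f(Q\cdot\cD)=f(Q)f(\cD)$, and it is the standard normalization (it holds for $\mu$).

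Next I would interpret the infinite product as the limit of the finite partial products $\Pi_S:=\prod_{P\in S}\bigl(\sum_{k\ge 0}\frac{f(P^k)}{P^k}\bigr)$, taken over finite sets $S$ of prime ideals directed by inclusion. Each $\Pi_S$ is a well-defined element of $\cR(\cD)$, since it is a product of finitely many elements. The key computational step is the identity
$$\Pi_S=\sum_{\substack{Q\in I(\cD)\\ P\mid Q\Rightarrow P\in S}}\frac{f(Q)}{Q}.$$
To prove it I would expand the finite product using the multiplication rule of $\cR(\cD)$: a monomial $1/Q$ arises exactly from the families of exponents $(\alpha_P)_{P\in S}$ with $\prod_{P\in S}P^{\alpha_P}=Q$, and by unique factorization of ideals in a Dedekind domain there is precisely one such family, namely $\alpha_P=\ord_P(Q)$, which is nonzero only for the finitely many $P\mid Q$. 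The coefficient of $1/Q$ is then $\prod_{P\in S}f(P^{\alpha_P})=\prod_{P\mid Q}f(P^{\ord_P(Q)})$, which equals $f(Q)$ by applying the multiplicativity of $f$ inductively to the pairwise coprime prime powers $P^{\ord_P(Q)}$ and using $f(\cD)=1$ to absorb the trivial factors coming from $P\in S$ with $P\nmid Q$.

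Finally I would pass to the limit coordinatewise. Fix an ideal $Q_0$ and put $S_0=\{P:P\mid Q_0\}$, a finite set. For every finite $S\supseteq S_0$ the displayed identity shows that the coefficient of $1/Q_0$ in $\Pi_S$ equals $f(Q_0)$, independently of $S$. Hence in the product topology on $\C^{I(\cD)}$ the family $(\Pi_S)_S$ converges as $S$ increases to the set of all primes, and its limit has coefficient $f(Q_0)$ at every $Q_0$; that limit is precisely $\sum_Q\frac{f(Q)}{Q}$. Since the infinite product $\prod_P\bigl(1+\frac{f(P)}{P}+\frac{f(P^2)}{P^2}+\cdots\bigr)$ denotes exactly this limit, the lemma follows. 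As for the main obstacle: there is no analytic difficulty at all here, the content being entirely bookkeeping; the single point requiring care is the convergence statement, namely that — because there may be infinitely many primes — one must not multiply out the product naively but instead verify that for each fixed ideal the relevant coefficient stabilizes once $S$ is large enough. I would also be careful to state the normalization $f(\cD)=1$ explicitly, since otherwise the identity fails for the zero function.
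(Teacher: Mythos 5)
Your proof is correct, and it is exactly the argument the paper has in mind: the lemma is stated without proof as ``well known,'' and your expansion of finite partial products via unique factorization of ideals, followed by coefficientwise stabilization in the product topology on $\cR(\cD)$, is the standard verification within the formal framework the paper sets up in Section~2.1. The observation that one needs $f(\cD)=1$ (so the identity excludes the identically zero function) is a worthwhile point of care that the paper leaves implicit.
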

We recall that we are considering in $\cR(\cD)$ the product topology
of $\C^{I(\cD)}$ giving to $\C$ the discrete topology.  The important thing for us is that this implies
the equality
\begin{equation}
\sum_{Q}\frac{f(Q)}{\cN(Q)}=\prod_P\Bigl(1+\frac{f(P)}{\cN(P)}+\frac{f(P^2)}{\cN(P)^2}+\cdots\Bigr),
\end{equation}
when one of the two sides of the equation converges absolutely.

\subsection{Multiplicative functions associated to a graph}

\begin{lem}\label{P:Mul}
Let $f\colon I(\cD)\to\C$ be a multiplicative function,  $G$ a graph and $r$ a
vertex in $G$. Then the two functions $g_{G,f}$ and $g_{G,f}^r\colon I(\cD)\to\C$,
defined by
\begin{align}
g_{G,f}(Q)&=\sum_{\substack{(N_1,\dots, N_e)\\ M_1\cdots M_v=Q}}f(N_1)\cdots f(N_e)
\\
and\\
g_{G,f}^r(Q)&=\sum_{\substack{(N_1,\dots, N_e)\\M_1\cdots  \widehat{M_r}\cdots M_v=Q}}f(N_1)\cdots f(N_e),
\end{align}
are multiplicative. 
\end{lem}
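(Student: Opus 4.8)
The plan is to show that $g_{G,f}$ and $g_{G,f}^r$ are multiplicative by exploiting the fact that, when $Q = Q' Q''$ with $\gcd(Q',Q'')=1$, every edge labeling $(N_1,\dots,N_e)$ contributing to $g_{G,f}(Q)$ splits uniquely into a pair of edge labelings, one contributing to $g_{G,f}(Q')$ and one to $g_{G,f}(Q'')$, and that this splitting is compatible with the passage from edge labelings to the associated vertex labelings $(M_1,\dots,M_v)$. First I would fix coprime ideals $Q', Q''$ and set $Q = Q'Q''$. Given an edge labeling $(N_1,\dots,N_e)$ with $M_1\cdots M_v = Q$, I would observe that since $\cD$ is a Dedekind domain each $N_a$ factors uniquely as $N_a = N_a' N_a''$ where $N_a'$ (resp. $N_a''$) collects the prime powers of $N_a$ dividing $Q'$ (resp. $Q''$); here I use that $\gcd(Q',Q'')=1$ forces every prime dividing some $M_r$ — hence every prime dividing some $N_a$ — to divide exactly one of $Q'$, $Q''$.

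The key step is then to check that if $(M_1,\dots,M_v)$ is the vertex labeling associated to $(N_1,\dots,N_e)$ via Definition \ref{D:assoc}, and $(M_r')$, $(M_r'')$ are the vertex labelings associated to $(N_a')$ and $(N_a'')$ respectively, then $M_r = M_r' M_r''$ for every $r$ with $\gcd(M_r',M_r'')=1$. This follows because $\lcm$ distributes over the coprime factorization: $M_r = \lcm_a N_a = \lcm_a (N_a' N_a'') = (\lcm_a N_a')(\lcm_a N_a'') = M_r' M_r''$, using that the $N_a'$ are supported on primes dividing $Q'$ and the $N_a''$ on primes dividing $Q''$. Consequently $M_1\cdots M_v = (M_1'\cdots M_v')(M_1''\cdots M_v'')$, and since the left side equals $Q = Q'Q''$ with the two factors on the right coprime and dividing $Q'$, $Q''$ respectively, uniqueness of factorization gives $M_1'\cdots M_v' = Q'$ and $M_1''\cdots M_v'' = Q''$. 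Thus the map $(N_a) \mapsto ((N_a'),(N_a''))$ is a bijection from edge labelings counted by $g_{G,f}(Q)$ onto pairs of edge labelings counted by $g_{G,f}(Q')$ and $g_{G,f}(Q'')$; its inverse is componentwise multiplication $(N_a', N_a'') \mapsto (N_a' N_a'')$, whose associated vertex labeling multiplies back correctly by the same $\lcm$-distributivity.

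Finally I would use multiplicativity of $f$: since $\gcd(N_a',N_a'')=1$ we have $f(N_a) = f(N_a')f(N_a'')$, so each summand factors as $\prod_a f(N_a) = \bigl(\prod_a f(N_a')\bigr)\bigl(\prod_a f(N_a'')\bigr)$, and summing over the bijection yields $g_{G,f}(Q) = g_{G,f}(Q')\,g_{G,f}(Q'')$. The argument for $g_{G,f}^r$ is identical word for word, simply omitting the vertex $r$ from all the products $M_1\cdots\widehat{M_r}\cdots M_v$; the factorization $\widehat{M_r} $ omitted still gives $M_1'\cdots\widehat{M_r'}\cdots M_v' = Q'$ and likewise for $Q''$, because omitting one coprime factor from both sides of $M_1\cdots M_v = Q'Q''$ preserves the splitting. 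The main obstacle — really the only point requiring care — is verifying the $\lcm$-distributivity over coprime parts and confirming that the induced factorization $M_1\cdots M_v = Q'Q''$ is the coprime one, so that unique factorization in $\cD$ pins down $\prod_r M_r' = Q'$; once that is in hand the bijection and the multiplicativity of $f$ finish the proof mechanically.
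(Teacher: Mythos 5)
Your proposal is correct and follows essentially the same route as the paper: split each $N_a$ into its coprime parts $N_a'N_a''$ supported on the primes of $Q'$ and $Q''$, note that the $\lcm$'s (hence the $M_r$'s) factor accordingly, and conclude via the resulting bijection and the multiplicativity of $f$. The only point you gloss over, which the paper states explicitly, is that in the $g_{G,f}^r$ case one still knows every $N_a$ divides $Q$ because every edge has an endpoint $s\ne r$, so $N_a\mid M_s$ with $M_s$ among the retained factors.
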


\begin{proof}
Consider the second function (the other is analogous and simpler).
First we show that the sum defining $g_{G,f}^r(Q)$ is finite. Let $(N_1,\dots, N_e)$ be
an edge labeling such that $M_1\cdots  \widehat{M_r}\cdots M_v=Q$.
Any edge
$a$ contains a vertex $s\ne r$, therefore $N_a\mid M_s$ and therefore $N_a\mid Q$.
Therefore the sum can be restricted to edge labelings formed with divisors of $Q$. These
are of finite number.  Therefore the two functions are well defined.

Consider now an edge labelling $(N_1,\dots, N_e)$ such that
$M_1\cdots  \widehat{M_r}\cdots M_v=Q_1Q_2$ with $\gcd(Q_1,Q_2)=1$.
By the previous reasoning we have
$N_a\mid Q_1Q_2$ for any edge $a$. Therefore  we can find another two
edge labelings $(N_{1,1},\dots, N_{1,e})$ and $(N_{2,1},\dots, N_{2,e})$ such that
$N_{1,a}\mid Q_{1}$, $N_{2,a}\mid Q_2$ and $N_{1,a}N_{2,a}=N_a$, for any edge $a$.  It is easy to
see that in this case (with $\gcd(Q_1,Q_2)=1$) we have $M_s=M_{1,s}M_{2,s}$
for any vertex $s$, and
$$M_{1,1}\cdots  \widehat{M_{1,r}}\cdots M_{1,v}=Q_1,\quad
M_{2,1}\cdots  \widehat{M_{2,r}}\cdots M_{2,v}=Q_2.$$
Analogously if we start with two edge labelings $(N_{i,1},\dots, N_{i,e})$ for $i=1$, $2$,  satisfying the above relations, the edge labeling
formed with $N_a=N_{1,a}N_{2,1}$ will satisfy $M_1\cdots  \widehat{M_r}\cdots M_v=Q_1Q_2$.  Notice also that since $\gcd(N_{1,a},N_{2,a})=1$ we have
$f(N_a)=f(N_{1,a})f(N_{2,a})$.

Therefore
\begin{multline*}
g_{G,f}^r(Q_1Q_2)=\sum_{\substack{(N_1,\dots, N_e)\\M_1\cdots  \widehat{M_r}\cdots M_v=Q}}f(N_1)\cdots f(N_e)=\\
\sum_{\substack{(N_{1,1},\dots, N_{1,e})\\M_{1,1}\cdots  \widehat{M_{1,r}}\cdots M_{1,v}=Q_1}}\mskip -10mu f(N_{1,1})\cdots f(N_{1,e})
\sum_{\substack{(N_{2,1},\dots, N_{2,e})\\M_{1,1}\cdots  \widehat{M_{2,r}}\cdots M_{2,v}=Q_2}}\mskip -10mu f(N_{2,1})\cdots f(N_{2,e}).
\end{multline*}
In other words $g_{G,f}^r(Q_1Q_2)=g_{G,f}^r(Q_1)g_{G,f}^r(Q_2)$.\end{proof}

We will need to consider the case of the multiplicative functions
$f=\mu$ or $f=|\mu|$. Therefore we define
\begin{align*}
f_G(Q)&:=\sum_{\substack{(N_1,\dots, N_e)\\M_1\cdots M_v=Q}}\mu(N_1)\cdots \mu(N_e),\\
f_G^+(Q)&:=\sum_{\substack{(N_1,\dots, N_e)\\M_1\cdots M_v=Q}}
|\mu(N_1)\cdots \mu(N_e)|\\
g_{G,r}(Q)&:=
\sum_{\substack{(N_1,\dots, N_e)\\M_1\cdots  \widehat{M_r}\cdots M_v=Q}}\mu(N_1)\cdots \mu(N_e),\\
g_{G,r}^+(Q)&:=\sum_{\substack{(N_1,\dots, N_e)\\M_1\cdots  \widehat{M_r}\cdots M_v=Q}}
|\mu(N_1)\cdots \mu(N_e)|
\end{align*}
By Lemma \ref{P:Mul} these four functions are multiplicative so that
their values are determined by their values in powers $P^n$ of prime ideals $P$.
It is very easy to see that these values $f_G(P^n)$, $f_G^+(P^n)$, $g_G(P^n)$,
$g_G^+(P^n)$ are rational integers independent of the special Dedekind domain, because
the divisors of $P^n$ are $1$, $P$, $P^2$, \dots, $P^n$ in any Dedekind domain.

In \cite{Ari}, using slightly different notation, we considered two polynomials associated to a graph $G$. Namely,
\begin{equation}
Q_G(z)=\sum_{F\subset E}(-1)^{|F|}z^{|v(F)|},\quad
Q_G^+(z)=\sum_{F\subset E}z^{|v(F)|},
\end{equation} where $v(F)=\cup_{\{r,s\}\in\F}\{r,s\}$; the set of all vertices adjacent to edges contained in $F$.
We proved
\begin{lemma}\label{L:poly}
For any graph $G$ and prime $P$ the value $f_G(P^k)$ \emph{(}respectively of $f_G^+(P^k)$
\emph{)} is
equal to the coefficient of $z^k$ in the polynomial $Q_G(z)$, \emph{(}respectively
in the polynomial $Q_G^+(z)$\emph{)}.

In particular we have $f_G(P)=f^+_G(P)=0$.
\end{lemma}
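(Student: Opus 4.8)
The plan is to compute $f_G(P^k)$ directly from its definition as a sum over edge labelings $(N_1,\dots,N_e)$ with $M_1\cdots M_v = P^k$, exploiting that each $N_a$ must be a divisor of some $M_s$ and hence a power of $P$. Since $\mu(N_a)=0$ unless $N_a \in \{1,P\}$, only edge labelings with every $N_a \in \{1,P\}$ contribute, and such a labeling is the same datum as a subset $F \subseteq E$, namely $F = \{a : N_a = P\}$. For such a labeling, the associated vertex labeling has $M_r = P$ exactly when the vertex $r$ is incident to some edge in $F$ (i.e. $r \in v(F)$) and $M_r = 1$ otherwise; thus $M_1\cdots M_v = P^{|v(F)|}$. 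Therefore the condition $M_1\cdots M_v = P^k$ becomes $|v(F)| = k$, and $\mu(N_1)\cdots\mu(N_e) = (-1)^{|F|}$. Summing, $f_G(P^k) = \sum_{F \subseteq E,\ |v(F)|=k} (-1)^{|F|}$, which is precisely the coefficient of $z^k$ in $Q_G(z) = \sum_{F\subseteq E}(-1)^{|F|}z^{|v(F)|}$. The argument for $f_G^+$ is identical with $(-1)^{|F|}$ replaced by $1$ throughout, using $|\mu(N_a)| = 1$ iff $N_a \in \{1,P\}$.

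The key steps in order: first, observe that any contributing edge labeling consists of divisors of $P^k$, hence of prime powers $P^{j}$; second, use $\mu$ (resp. $|\mu|$) to force each $N_a \in \{1,P\}$; third, set up the bijection between such labelings and subsets $F \subseteq E$; fourth, verify that under this bijection $M_r = P^{[r \in v(F)]}$, so the constraint on $\prod M_r$ translates into $|v(F)| = k$ and the weight becomes $(-1)^{|F|}$ (resp. $1$); fifth, read off the coefficient of $z^k$. For the final assertion $f_G(P) = f_G^+(P) = 0$: the coefficient of $z^1$ counts subsets $F$ with $|v(F)| = 1$, but a nonempty $F$ contains an edge $\{r,s\}$ with $r \ne s$, forcing $|v(F)| \ge 2$, while $F = \emptyset$ gives $|v(F)| = 0$; so no subset has $|v(F)| = 1$ and both coefficients vanish. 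Equivalently, in the notation of the theorem statement, $Q_G(z) = 1 + B_2 z^2 + \cdots$ has no $z^1$ term by construction.

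I do not expect a serious obstacle here; the only point requiring a little care is the claim that $M_r = P$ precisely when $r \in v(F)$, which uses $\lcm(\emptyset) = 1$ for isolated vertices (or vertices incident only to edges not in $F$) and $\lcm$ of copies of $1$ and $P$ equals $P$ once at least one $P$ appears. One should also note explicitly that the polynomial $Q_G(z)$ defined here coincides (possibly up to the different notation mentioned) with the one used in \cite{Ari}, so that invoking that reference is legitimate; alternatively the computation above is self-contained and can simply be presented in full. Since the paper already states these polynomials were studied in \cite{Ari} "using slightly different notation," the cleanest write-up is to give the short bijective computation directly rather than quoting the reference.
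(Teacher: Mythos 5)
Your proof is correct and is essentially the argument the paper intends: the paper defers the proof of Lemma \ref{L:poly} to \cite{Ari}, but the identical reduction (each $N_a$ divides $P^k$, $\mu$ forces $N_a\in\{1,P\}$, bijection with subsets $F\subseteq E$, and $M_1\cdots M_v=P^{|v(F)|}$) is exactly the one the paper spells out for the analogous Lemma \ref{L:poly-vertex}. Your handling of the ``in particular'' clause (no subset $F$ has $|v(F)|=1$ since every edge has two distinct endpoints) also matches the intended reasoning, so nothing is missing.
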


To study the two functions $g_{G,r}$ and $g_{G,r}^+$ we introduce two other
polynomials,
\begin{equation}
Q_{G,r}(z)=\sum_{F\subset E}(-1)^{|F|}z^{|v(F)\smallsetminus\{r\}|},\quad
Q_{G,r}^+(z)=\sum_{F\subset E}z^{|v(F)\smallsetminus\{r\}|}.
\end{equation}

\begin{lemma}\label{L:poly-vertex}
Let  $G$ be a graph  and $r$ one of its vertices.  For any prime $P$ the value $g_{G,r}(P^k)$ \emph{(}respectively $g_{G,r}^+(P^k)$\emph{)} is
equal to the coefficient of $z^k$ in the polynomial $Q_{G,r}(z)$ \emph{(}respectively
in the polynomial $Q_{G,r}^+(z)$\emph{)}.

In particular we have $g_{G,r}^+(P)=-g_{G,r}(P)=d_r$, where $d_r$ is the degree of the vertex
$r$ and $g_{G,r}^+(P^m)=-g_{G,r}(P^m)=0$ for $m\ge v$.
\end{lemma}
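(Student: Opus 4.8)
The plan is to reduce the computation of $g_{G,r}(P^k)$ and $g_{G,r}^+(P^k)$ to a counting problem over subsets of edges, in parallel with the proof of Lemma \ref{L:poly} in \cite{Ari}. First I would observe that since $\mu$ is supported on squarefree ideals, only edge labelings $(N_1,\dots,N_e)$ in which every $N_a$ is squarefree contribute; and since each $N_a$ divides $Q=P^k$, each $N_a$ is either $1$ or $P$. Thus an edge labeling contributing to $g_{G,r}(P^k)$ is nothing but a choice of a subset $F\subset E$ (the edges $a$ with $N_a=P$), and $\mu(N_1)\cdots\mu(N_e)=(-1)^{|F|}$. For such an $F$, the associated vertex labeling has $M_s=P$ precisely when $s$ is incident to some edge of $F$, i.e. when $s\in v(F)$, and $M_s=1$ otherwise. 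Hence $M_1\cdots\widehat{M_r}\cdots M_v = P^{|v(F)\smallsetminus\{r\}|}$, and the constraint $M_1\cdots\widehat{M_r}\cdots M_v=P^k$ becomes exactly $|v(F)\smallsetminus\{r\}|=k$. Summing $(-1)^{|F|}$ over all such $F$ gives precisely the coefficient of $z^k$ in $Q_{G,r}(z)=\sum_{F\subset E}(-1)^{|F|}z^{|v(F)\smallsetminus\{r\}|}$; the same argument with $|\mu|$ in place of $\mu$ (so each surviving term is $+1$) gives the statement for $g_{G,r}^+(P^k)$ and $Q_{G,r}^+(z)$.

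Next I would extract the two stated special values. For $k=1$: a subset $F\subset E$ contributes to the coefficient of $z^1$ in $Q_{G,r}^+$ iff $|v(F)\smallsetminus\{r\}|=1$, i.e. $F$ uses exactly one non-$r$ vertex. Since $F$ is a set of edges and every edge has two distinct endpoints, the only way to touch exactly one vertex other than $r$ is for $F$ to consist of a single edge, and that edge must be incident to $r$ (so its two endpoints are $r$ and that one other vertex). There are exactly $d_r$ such edges, so the coefficient of $z$ in $Q_{G,r}^+$ is $d_r$, giving $g_{G,r}^+(P)=d_r$; each such $F$ has $|F|=1$ so contributes $(-1)^1=-1$ to $Q_{G,r}$, whence $g_{G,r}(P)=-d_r$. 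For $m\ge v$: any $F\subset E$ satisfies $v(F)\subset V$, so $|v(F)\smallsetminus\{r\}|\le v-1<m$; hence no subset contributes to the coefficient of $z^m$, and $g_{G,r}^+(P^m)=g_{G,r}(P^m)=0$.

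I do not expect any serious obstacle here; the content is entirely bookkeeping. The one point that needs a little care is the passage, for a fixed target $Q=P^k$, from "edge labeling by squarefree divisors of $P^k$" to "subset of $E$" — one must note both that squarefree divisors of $P^k$ are only $1$ and $P$ (so the labeling is genuinely just a subset) and that the associated vertex labeling is correctly read off as $M_s=P^{[s\in v(F)]}$, which is immediate from $M_s=\lcm\{N_a: a \ni s\}$ and $\lcm(\emptyset)=1$. After that, matching the exponent condition $|v(F)\smallsetminus\{r\}|=k$ to the definition of $Q_{G,r}$ is a tautology, and the special values drop out as above. One could also simply remark that $g_{G,r}$ and $g_{G,r}^+$ are, by Lemma \ref{P:Mul}, determined by their values at prime powers, and that these values are the same in every Dedekind domain (the divisors of $P^k$ being $1,P,\dots,P^k$ in all cases), so the identity, once checked in $\F_q[z]$ or $\Z$, holds universally.
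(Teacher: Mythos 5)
Your proposal is correct and follows essentially the same argument as the paper: both reduce edge labelings dividing $P^k$ to subsets $F\subset E$ via $N_a\in\{1,P\}$, read off $M_s=P$ exactly for $s\in v(F)$, match the exponent condition to $|v(F)\smallsetminus\{r\}|=k$, and then extract the $k=1$ and $k\ge v$ cases by the same counting of edges incident to $r$ and the bound $|v(F)\smallsetminus\{r\}|\le v-1$. No gaps.
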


\begin{proof}
Consider, for example, the case of $g_{G,r}(P^k)$. By definition
$$g_{G,r}(P^k):=
\sum_{\substack{(N_1,\dots, N_e)\\M_1\cdots  \widehat{M_r}\cdots M_v=P^k}}
\mu(N_1)\cdots \mu(N_e).$$
Any edge labeling $(N_1,\dots, N_e)$ giving a non-null term satisfies $N_j\mid P^k$ for $1\le j\le e$.
Therefore for each $j$ we have $N_j=1$ or $N_j=P$. In this way each non-null term is
associated bijectively  to a subset $F\subset E$: the set of $j$ for which $N_j=P$.
In this case $\mu(N_1)\cdots \mu(N_e)=(-1)^{|F|}$. The elements of the corresponding
vertex labelling $(M_1,\dots M_v)$ satisfies also $M_s=1$ or $M_s=P$. Precisely $M_s=P$ if
$s\in v(F)$.  Since
$M_1\cdots  \widehat{M_r}\cdots M_v=P^k$  we have $k=|v(F)\smallsetminus\{r\}|$.
It follows that $g_{G,r}(P^k)$ is the coefficient of the polynomial $Q_{G,r}(z)$.

We have $|v(F)\smallsetminus\{r\}|=1$ just in the case $F$ consists only of an edge
$e=\{r,s\}$
with an extreme equal to $r$. There are precisely $d_r$ such edges. Therefore
the term of first degree in $Q_{G,r}^+(z)$ is $d_r$. In the case of $Q_{G,r}(z)$
these same terms appear with a factor $(-1)^{|F|}=-1$.

Finally notice that $|v(F)\smallsetminus\{r\}|\le v-1$. Therefore this is the maximum
degree of any term of the polynomials $Q_{G,r}(z)$ and $Q_{G,r}^+(z)$.
\end{proof}

\subsection{A particular multiplicative function}

We define a function that we will use later in the bound of the error terms in
our approximation to $g(n)$.

\begin{definition}\label{D:frs}
Let $G=(V,E)$ be a graph that has two vertices $r\ne s$ with $r$, $s\in V$, that are not joined by an edge. That is, $\{r,s\}\not\in E$
We define a function
$f_{r,s}\colon I(\cD)\to\cR(\cD)$ from the ideals to the ring of ideals by
\begin{equation}
f_{r,s}(Q)=\sum_{\substack{(N_1,\dots, N_e)\\ N_j\mid Q}}\frac{|\mu(N_1)\cdots\mu(N_e)|}
{M_1\cdots\widehat{M_r}\cdots\widehat{M_s}\cdots M_v}.
\end{equation}
We sum on all edge labeling formed with divisors of $Q$ and we omit the two factors
$M_r$ and $M_s$ corresponding to the vertices of the pair $\{r,s\}$.
\end{definition}

\begin{lem} The function $f_{r,s}(Q)$ is multiplicative. That is,
$\gcd(Q_1,Q_2)=1$ implies  $f_{r,s}(Q_1Q_2)=f_{r,s}(Q_1)f_{r,s}(Q_2)$.
\end{lem}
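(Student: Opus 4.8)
The plan is to repeat, almost verbatim, the argument used for Lemma~\ref{P:Mul} (multiplicativity of $g_{G,f}^r$), the only new features being that two vertex factors $M_r$ and $M_s$ are omitted rather than one, that the values now lie in the ring $\cR(\cD)$ instead of $\C$, and that the summation is explicitly restricted to edge labelings by divisors of $Q$. This last restriction makes the defining sum finite immediately ($Q$ has only finitely many divisors, hence only finitely many edge labelings $(N_1,\dots,N_e)$ have all $N_j\mid Q$), so $f_{r,s}(Q)$ is a well-defined element of $\cR(\cD)$; note moreover that each $M_s=\lcm\{N_a:a\ni s\}$ then divides $Q$, so the denominator ideal appearing in the summand is a genuine divisor of $Q^{v-2}$.

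Fix $Q_1,Q_2$ with $\gcd(Q_1,Q_2)=1$. The combinatorial core is a bijection between edge labelings $(N_1,\dots,N_e)$ with $N_j\mid Q_1Q_2$ for all $j$ and pairs of edge labelings $\bigl((N_{1,1},\dots,N_{1,e}),(N_{2,1},\dots,N_{2,e})\bigr)$ with $N_{1,j}\mid Q_1$ and $N_{2,j}\mid Q_2$ for all $j$: send $N_j$ to $\bigl(\gcd(N_j,Q_1),\gcd(N_j,Q_2)\bigr)$, with inverse given by componentwise multiplication. Unique factorization of ideals together with $\gcd(Q_1,Q_2)=1$ shows $N_j=N_{1,j}N_{2,j}$ with $\gcd(N_{1,j},N_{2,j})=1$, so the two maps are mutually inverse.

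I would then check that this bijection respects every ingredient of the summand. Since all $N_{1,a}\mid Q_1$ and all $N_{2,a}\mid Q_2$, the two families are pairwise coprime, so the least common multiples split: for each vertex $s$ one gets $M_s=M_{1,s}M_{2,s}$ with $M_{i,s}=\lcm\{N_{i,a}:a\ni s\}$ and $\gcd(M_{1,s},M_{2,s})=1$. Hence the denominator ideal factors as $M_1\cdots\widehat{M_r}\cdots\widehat{M_s}\cdots M_v=B_1B_2$ with $B_i=M_{i,1}\cdots\widehat{M_{i,r}}\cdots\widehat{M_{i,s}}\cdots M_{i,v}$ and $\gcd(B_1,B_2)=1$; and since $\mu$ is multiplicative and $\gcd(N_{1,j},N_{2,j})=1$,
$$|\mu(N_1)\cdots\mu(N_e)|=|\mu(N_{1,1})\cdots\mu(N_{1,e})|\cdot|\mu(N_{2,1})\cdots\mu(N_{2,e})|.$$

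Finally I would assemble this in $\cR(\cD)$. By the definition of the product there, $\dfrac{c_1}{B_1}\cdot\dfrac{c_2}{B_2}=\dfrac{c_1c_2}{B_1B_2}$, so under the bijection each summand of $f_{r,s}(Q_1Q_2)$ is the product of the corresponding summand of $f_{r,s}(Q_1)$ with that of $f_{r,s}(Q_2)$; since all the sums are finite, the distributive law in $\cR(\cD)$ then yields $f_{r,s}(Q_1Q_2)=f_{r,s}(Q_1)f_{r,s}(Q_2)$. The only step requiring any care is the splitting $M_s=M_{1,s}M_{2,s}$ of the vertex labelings — this is exactly the point asserted (as ``easy to see'') in the proof of Lemma~\ref{P:Mul}, and it holds because the least common multiple of finitely many ideals, each written as a product of a divisor of $Q_1$ and a coprime divisor of $Q_2$, equals the product of the lcm of the $Q_1$-parts with the lcm of the $Q_2$-parts. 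Everything else is a direct transcription of the proof of Lemma~\ref{P:Mul}.
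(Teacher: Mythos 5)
Your proposal is correct and follows essentially the same route as the paper: the paper's own proof is exactly the bijection $(N_1,\dots,N_e)\leftrightarrow\bigl((N_{1,j}),(N_{2,j})\bigr)$ with $N_j=N_{1,j}N_{2,j}$ and the splitting $M_j=M_{1,j}M_{2,j}$, stated tersely, and you have simply filled in the details (finiteness, the lcm splitting, and the product rule in $\cR(\cD)$) accurately.
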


\begin{proof}
If we assume $\gcd(Q_1,Q_2)=1$, then any edge labeling $(N_1,\dots, N_e)$
with $N_j\mid Q_1Q_2$ can be obtained in a unique way from two
edge labelings $(N_{1,1},\dots, N_{1,e})$ and $(N_{2,1},\dots, N_{2,e})$
with $N_{i,j}\mid Q_i$ by the equations
$N_j=N_{1,j}N_{2,j}$. The corresponding vertex labeling then satisfies
$M_j=M_{1,j}M_{2,j}$, and the result follows.
\end{proof}

\begin{lem}
Let $Q_{r,s}(z)$ be the polynomial
\begin{equation}\label{D:Qrs}
Q_{r,s}(z)=\sum_{F\subset E}z^{|v(F)\smallsetminus\{r,s\}|}.
\end{equation}
For any natural number $m$ and prime ideal $P$ we have
\begin{equation}\label{E:spfunc}
f_{r,s}(P^m)=f_{r,s}(P)=Q_{r,s}\Bigl(\frac{1}{P}\Bigr).
\end{equation}
\end{lem}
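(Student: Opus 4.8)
The plan is to compute $f_{r,s}(P^m)$ directly from the definition by observing, exactly as in the proof of Lemma~\ref{L:poly-vertex}, that the edge labelings $(N_1,\dots,N_e)$ with $N_j\mid P^m$ contributing a nonzero term are precisely those where each $N_j\in\{1,P\}$. Such a labeling is in bijection with the subset $F\subset E$ consisting of those edges $j$ with $N_j=P$; for this labeling $|\mu(N_1)\cdots\mu(N_e)|=1$, and the associated vertex labeling has $M_t=P$ exactly when $t\in v(F)$ and $M_t=1$ otherwise. Hence the omitted-product denominator $M_1\cdots\widehat{M_r}\cdots\widehat{M_s}\cdots M_v$ equals $P^{|v(F)\smallsetminus\{r,s\}|}$, and summing over all $F\subset E$ gives
\[
f_{r,s}(P^m)=\sum_{F\subset E}\frac{1}{P^{|v(F)\smallsetminus\{r,s\}|}}=Q_{r,s}\Bigl(\frac1P\Bigr),
\]
which is the asserted value.

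The one point requiring a little care is that the right-hand side is manifestly independent of $m$, so the equality $f_{r,s}(P^m)=f_{r,s}(P)$ follows for free once the computation above is carried out for general $m\ge 1$. The reason the exponent $m$ plays no role is that $P^m$ has only the divisors $1,P,\dots,P^m$, but any $N_j$ dividing $P^m$ that contributes a nonzero term to the sum must be squarefree (else $\mu(N_j)=0$), and the only squarefree divisors of $P^m$ are $1$ and $P$; so effectively only the cases $N_j\in\{1,P\}$ ever arise, independent of how large $m$ is. This mirrors the remark made before Lemma~\ref{L:poly} that the values of these multiplicative functions at prime powers are integers (here, values of a polynomial at $1/P$) independent of the exponent beyond a certain point.

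There is essentially no obstacle here: the only thing to verify is the claim $M_t=P \iff t\in v(F)$ for the vertex labeling associated to the edge labeling $N_j=P\cdot[j\in F]$, which is immediate from Definition~\ref{D:assoc}, since $M_t=\lcm\{N_a : a \text{ incident to } t\}$ equals $P$ iff some edge incident to $t$ lies in $F$, i.e.\ iff $t\in v(F)$. Removing the two factors $M_r$ and $M_s$ from the product then replaces $|v(F)|$ by $|v(F)\smallsetminus\{r,s\}|$ in the exponent, and the definition \eqref{D:Qrs} of $Q_{r,s}$ closes the argument. The hypothesis $\{r,s\}\notin E$ in Definition~\ref{D:frs} is not actually needed for this particular identity, but is retained because $f_{r,s}$ is only used in later arguments (bounding error terms) where the non-adjacency matters.
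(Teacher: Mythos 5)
Your proof is correct and follows essentially the same argument as the paper: both identify the nonzero terms with subsets $F\subset E$ via $N_j=P$ iff $j\in F$ (using that the only squarefree divisors of $P^m$ are $1$ and $P$), note that $M_t=P$ exactly when $t\in v(F)$, and conclude each term equals $1/P^{|v(F)\smallsetminus\{r,s\}|}$. Your added observations --- the explicit reason the result is independent of $m$, and the remark that $\{r,s\}\notin E$ is not needed for this identity --- are accurate but not a departure from the paper's route.
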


\begin{proof}
In the definition of $f_{r,s}(P^m)$ we have to sum for each edge labeling
$(N_1,\dots, N_e)$ of  $G$, where $N_j\mid P^m$. Each term of the sum has
a coefficient $|\mu(N_1)\cdots\mu(N_e)|$. Therefore we have only to consider
the term with $N_j=1$ or $N_j=P$.  Any such labeling is
determined by the set  $F=\{j\colon 1\le j\le e, N_j=P\}$. Therefore each
non-null term corresponds to a subset $F\subset E$.  It is clear that for this
labeling the corresponding vertex labeling $(M_1,\dots, M_v)$ will have
$M_t=1$ or $M_t=P$. Precisely the set $v(F)=\cup_{\{t_1,t_2\}\in F}\{t_1,t_2\}$
coincides with the set of vertices $t$ with  $M_t=P$. Therefore the corresponding
term is equal to $1/P^{|v(F)\smallsetminus\{r,s\}|}$.

After this reasoning it is clear that we have \eqref{E:spfunc}.
\end{proof}
We will also need the following.
\begin{lem}\label{C:frspol}
We have
\begin{equation}
f_{r,s}(P)=1+\frac{a_1}{P}+\cdots+\frac{a_{v-2}}{P^{v-2}},
\end{equation}
where
the sum of all coefficients $1+a_1+\cdots+a_{v-2}=2^e$.
\end{lem}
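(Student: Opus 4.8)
The plan is to evaluate $f_{r,s}(P)$ by setting $z=1/P$ in the already-established identity \eqref{E:spfunc}, which gives $f_{r,s}(P)=Q_{r,s}(1/P)=\sum_{F\subset E}P^{-|v(F)\smallsetminus\{r,s\}|}$. Since $|v(F)\smallsetminus\{r,s\}|$ ranges between $0$ and $v-2$ (the hypothesis $\{r,s\}\notin E$ is not actually needed for the degree bound, only the total vertex count $v$ matters, but it guarantees that the two omitted vertices can genuinely both be excluded), collecting terms of equal exponent shows $f_{r,s}(P)$ has the claimed shape $1+a_1/P+\cdots+a_{v-2}/P^{v-2}$, where $a_k=\#\{F\subset E : |v(F)\smallsetminus\{r,s\}|=k\}$ is a nonnegative integer. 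In particular $a_0\ge 1$ because $F=\emptyset$ contributes.

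Next I would compute the sum of coefficients. By definition $1+a_1+\cdots+a_{v-2}=\sum_{k=0}^{v-2}\#\{F\subset E:|v(F)\smallsetminus\{r,s\}|=k\}=\#\{F\subset E\}=2^e$, since every subset $F$ of the edge set $E$ is counted exactly once in exactly one of the classes (its value of $|v(F)\smallsetminus\{r,s\}|$ is a well-defined integer in $[0,v-2]$). Equivalently, this is just the statement that $Q_{r,s}(1)=\sum_{F\subset E}1=2^{|E|}=2^e$. So the result follows essentially immediately from the formula for $Q_{r,s}$ and the fact that $P>1$.

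There is no real obstacle here; the only point requiring a word of care is verifying that the exponent $|v(F)\smallsetminus\{r,s\}|$ never exceeds $v-2$, so that no coefficient beyond $a_{v-2}$ appears — this is clear since $v(F)\subset V$, $|V|=v$, and we are removing (at least potentially, and by hypothesis genuinely) two distinct vertices $r\ne s$ from the count. I would write the proof as: apply \eqref{E:spfunc}, expand $Q_{r,s}(1/P)$ by grouping monomials of the same degree to read off the $a_k$ and see they are nonnegative integers with $a_0\ge 1$; then evaluate at $z=1$ to get $\sum_k a_k = 2^e$. This is a two- or three-line argument.

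\begin{proof}
By \eqref{E:spfunc} we have $f_{r,s}(P)=Q_{r,s}(1/P)$, where by \eqref{D:Qrs}
$$Q_{r,s}(z)=\sum_{F\subset E}z^{|v(F)\smallsetminus\{r,s\}|}.$$
For every $F\subset E$ we have $v(F)\subset V$ and, since $r\ne s$, the set $v(F)\smallsetminus\{r,s\}$ has at most $v-2$ elements. Hence, grouping the monomials by degree,
$$Q_{r,s}(z)=\sum_{k=0}^{v-2}a_k z^k,\qquad a_k=\#\{F\subset E: |v(F)\smallsetminus\{r,s\}|=k\},$$
so each $a_k$ is a nonnegative integer, and $a_0\ge 1$ because the empty set $F=\emptyset$ contributes the constant term. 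Substituting $z=1/P$ gives the stated formula for $f_{r,s}(P)$. Finally, evaluating at $z=1$,
$$1+a_1+\cdots+a_{v-2}=Q_{r,s}(1)=\sum_{F\subset E}1=2^{|E|}=2^e,$$
which completes the proof.
\end{proof}
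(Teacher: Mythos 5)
Your proof is correct and follows the same route as the paper's (which is even terser: it simply observes that the sum of the coefficients of $Q_{r,s}$ counts the subsets $F\subset E$, hence equals $2^e$). One small point: the lemma asserts the constant term is exactly $1$, and your ``$a_0\ge 1$'' should be sharpened to $a_0=1$, which is where the hypothesis $\{r,s\}\notin E$ actually enters --- it guarantees that $F=\emptyset$ is the \emph{only} subset with $v(F)\smallsetminus\{r,s\}=\emptyset$.
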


\begin{proof}
The sum of the coefficients is the number of non-null
terms in the sum \eqref{D:Qrs}. Therefore $1+a_1+\cdots+a_{v-2}=2^e$.
\end{proof}

\subsection{Formula for the coefficient of the main term}
We are now able to quantify the coefficient of the main term.
\begin{lem}\label{T:coef}
For any given finite graph $G=(V,E)$, the series
\begin{equation}\label{E:coef}
\sum_{(N_1,\dots,N_e)}
\frac{\mu(N_1)\cdots\mu(N_e)}{q^{m_1}\cdots q^{m_v}}=\prod_PQ_G(q^{-\deg(P)}),
\end{equation}
where the sum extends to all possible edge labelings, and $m_r=\deg(M_r)$
converges absolutely.
\end{lem}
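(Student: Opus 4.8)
The plan is to recognize the left-hand series as a Dirichlet-type series in the ring $\cR(\cD)$ associated to the multiplicative function $f_G(Q)=\sum_{M_1\cdots M_v=Q}\mu(N_1)\cdots\mu(N_e)$ introduced before Lemma~\ref{L:poly}. Indeed, grouping the edge labelings $(N_1,\dots,N_e)$ according to the product $Q=M_1\cdots M_v$ of the associated vertex labels, and noting that $q^{m_1}\cdots q^{m_v}=q^{\deg(M_1)+\cdots+\deg(M_v)}=q^{\deg Q}=\cN(Q)$ since the norm is multiplicative on $\F_q[z]$, the sum becomes exactly $\sum_Q f_G(Q)/\cN(Q)=\cN^1\bigl(\sum_Q f_G(Q)/Q\bigr)$. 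So the first step is this regrouping, taking care that the correspondence between edge labelings and the pair (vertex label product $Q$, internal data) is the one that makes $f_G$ multiplicative — which is precisely the content of Lemma~\ref{P:Mul}.

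Next I would invoke the Euler product (Lemma~\ref{P:Euler}), valid in $\cR(\cD)$ for any multiplicative $f$, to write $\sum_Q f_G(Q)/Q=\prod_P\bigl(1+f_G(P)/P+f_G(P^2)/P^2+\cdots\bigr)$, and then apply $\cN^1$ termwise (legitimate once absolute convergence is in hand) to get $\prod_P\bigl(1+f_G(P)\cN(P)^{-1}+f_G(P^2)\cN(P)^{-2}+\cdots\bigr)$. By Lemma~\ref{L:poly}, $f_G(P^k)$ is the coefficient of $z^k$ in $Q_G(z)$ and $f_G(P^k)=0$ for $k>\deg Q_G\le v$ (and also $f_G(P)=0$), so the local factor is the finite sum $\sum_k f_G(P^k)\cN(P)^{-k}=Q_G\bigl(\cN(P)^{-1}\bigr)=Q_G\bigl(q^{-\deg P}\bigr)$, using $\cN(P)=q^{\deg P}$. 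This yields the claimed product $\prod_P Q_G(q^{-\deg P})$.

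The real work is justifying absolute convergence, which legitimizes both the regrouping of the series and the passage through the Euler product. Here I would dominate $|f_G(Q)|\le f_G^+(Q)$ and reduce to showing $\sum_P\sum_{k\ge2} f_G^+(P^k) q^{-k\deg P}<\infty$, i.e.\ that $\prod_P Q_G^+(q^{-\deg P})$ converges. Since $Q_G(z)$ and $Q_G^+(z)$ have zero linear coefficient (Lemma~\ref{L:poly}), every local factor is $1+O(q^{-2\deg P})$ with an absolute implied constant depending only on $G$ (the coefficients of $Q_G^+$ sum to $2^e$), so convergence follows from $\sum_P q^{-2\deg P}<\infty$. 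That last estimate is the standard count: there are at most $q^m$ monic irreducible polynomials of degree $m$ (in fact $\le q^m/m$), so $\sum_P q^{-2\deg P}\le\sum_{m\ge1} q^m q^{-2m}=\sum_{m\ge1} q^{-m}=\frac1{q-1}<\infty$.

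The main obstacle I anticipate is purely bookkeeping: making the regrouping from edge labelings to the multiplicative function $f_G$ airtight, and confirming that the two notions of ``infinite product'' in play — the formal product topology on $\cR(\cD)$ used to define $Z$ and the Euler products, and the genuine numerical product obtained after applying $\cN^1$ — agree once we know absolute convergence. Both points are handled by the machinery already set up in Section~\ref{SS:RD} and Lemma~\ref{P:Euler} (which explicitly states the numerical identity holds when one side converges absolutely), so after establishing $\prod_P Q_G^+(q^{-\deg P})<\infty$ the rest is a matter of assembling these cited facts in order; no new idea is required.
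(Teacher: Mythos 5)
Your proposal is correct and follows essentially the same route as the paper: regroup the edge labelings by $Q=M_1\cdots M_v$ to get $\sum_Q f_G(Q)/\cN(Q)$, apply the Euler product and Lemma \ref{L:poly} to identify the local factors as $Q_G(q^{-\deg P})$, and establish absolute convergence by dominating with $f_G^+$, using the vanishing of the linear coefficient and the bound $r_q(m)\le q^m$ so that $\sum_P q^{-2\deg P}<\infty$. The paper phrases the convergence step as a bound on $\prod_m(1+C/q^{2m})^{q^m}$ via logarithms, but this is the same estimate you give.
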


\begin{proof}
First notice that $q^{m_r}=\cN(M_r)$, therefore our sum is the norm of the member
of the ring of ideals
\begin{equation}
Z=\sum_{(N_1,\dots,N_e)}
\frac{\mu(N_1)\cdots\mu(N_e)}{M_1\cdots M_v}.
\end{equation}
Since we want to show absolute convergence we consider instead
\begin{equation}
Z^+=\sum_{(N_1,\dots,N_e)}
\frac{|\mu(N_1)\cdots\mu(N_e)|}{M_1\cdots M_v}.
\end{equation}
In this sum when we take the norm each element is positive, therefore for the
convergence we may reorder terms. We do so by joining the terms for
which $M_1\cdots M_v=Q$ a given ideal. For each $Q$ we are associating a finite
number of terms of the sum (see proof of Lemma \ref{P:Mul}).
We obtain
$$Z^+=\sum_Q\frac{1}{Q}\Bigl(\sum_{\substack{(N_1,\dots, N_e)\\M_1\cdots M_v=Q}}
|\mu(N_1)\cdots\mu(N_e)|\Bigr)=\sum_Q \frac{f_G^+(Q)}{Q}.$$
By Lemma \ref{P:Euler} (Euler product) we have
$$Z^+=\prod_P\Bigl(1+\frac{f_G^+(P)}{P}+\frac{f_G^+(P^2)}{P^2}+\cdots\Bigr).$$
By Lemma \ref{L:poly} we have also $f_G^+(P)=0$.

Taking norms we have
$$\cN(Z^+)=\prod_P\Bigl(1+\frac{f_G^+(P^2)}{q^{2\deg(P)}}+\frac{f_G^+(P^3)}{q^{3\deg(P)}}+\cdots\Bigr).$$
The number of monic irreducible polynomials of degree $n$ is $\le q^n$ (see Lemma
\ref{r(m)}). The sum of
the coefficients of $Q^+_G(z)$ is a constant $C=Q^+_G(1)$. Therefore
$$\cN^+(Z)\le \prod_{n=0}^\infty \Bigl(1+\frac{C}{q^{2n}}\Bigr)^{q^n}$$
It is clear that this is finite because taking logarithms yields
$$\sum_{n=0}^\infty q^n\log\Bigl(1+\frac{C}{q^{2n}}\Bigr)\le C\sum_{n=0}^\infty
\frac{1}{q^n}= C\frac{q}{q-1}<\infty$$

By similar reasoning we obtain
$$Z=\sum_Q\frac{1}{Q}\Bigl(\sum_{\substack{(N_1,\dots, N_e)\\M_1\cdots M_v=Q}}
\mu(N_1)\cdots\mu(N_e)\Bigr)=\sum_Q \frac{f_G(Q)}{Q}.$$
By Lemma \ref{P:Euler} (Euler product) we have
$$Z=\prod_P\Bigl(1+\frac{f_G(P)}{P}+\frac{f_G(P^2)}{P^2}+\cdots\Bigr).$$
Taking norms
$$\cN(Z)=\sum_{(N_1,\dots,N_e)}
\frac{\mu(N_1)\cdots\mu(N_e)}{q^{m_1}\cdots q^{m_v}}
=\prod_PQ_G(q^{-\deg(P)}),$$
where the sum extends to all monic irreducible polynomials $P$ in $\F_q	[z]$.
\end{proof}

Let $\rho_{G,q}$ be the sum in \eqref{E:coef}.
We have now established an interim result; expressing $g(n)$ as a fully developed main term and a number of error terms as follows.
\begin{lem}
The number $g(n)$, of tuples $(Q_1,\dots, Q_v)$  with $\deg (Q_a)\le n$ satisfying
the conditions of coprimality given by the graph $G$, is
\begin{equation}\label{E:second}
g(n)=\rho_{G,q}\Bigl(\frac{q^n}{q-1}\Bigr)^{v}-T+\sum_{k=1}^v R_k
\end{equation}
where
\begin{equation}\label{E:defT}
T= \Bigl(\frac{q^n}{q-1}\Bigr)^{v}\sum_{
\substack{(N_1,\dots,N_e)\\\text{some }\deg(N_a)> n}}
\frac{\mu(N_1)\cdots\mu(N_e)}{q^{m_1}\cdots q^{m_v}},
\end{equation}
and $R_k$ satisfies the bound in \eqref{E:boundR}.
\end{lem}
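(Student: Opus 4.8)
The statement to prove simply re-expresses the earlier identity \eqref{E:first} once we have evaluated the limit of its main-term coefficient. The plan is to start from \eqref{E:first}, which already writes
$$g(n)=\Bigl(\frac{q^n}{q-1}\Bigr)^{v}\sum_{\substack{(N_1,\dots,N_e)\\\deg(N_a)\le n}}\frac{\mu(N_1)\cdots\mu(N_e)}{q^{m_1}\cdots q^{m_v}}+\sum_{k=1}^v R_k,$$
with the $R_k$ obeying the bound \eqref{E:boundR}. The finite sum over $(N_1,\dots,N_e)$ with $\deg(N_a)\le n$ differs from the full infinite series \eqref{E:coef} exactly by the tail over those edge labelings in which \emph{some} $\deg(N_a)>n$; by Lemma \ref{T:coef} the full series converges absolutely to $\rho_{G,q}$, so splitting it as (full series) minus (tail) is legitimate.

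The steps I would carry out are: first, invoke Lemma \ref{T:coef} to replace the restricted inner sum in \eqref{E:first} by $\rho_{G,q}$ minus the complementary sum over edge labelings with some $\deg(N_a)>n$. Second, multiply the complementary sum by $\bigl(q^n/(q-1)\bigr)^v$ and name the result $T$, which is precisely \eqref{E:defT}. Third, substitute back to obtain
$$g(n)=\rho_{G,q}\Bigl(\frac{q^n}{q-1}\Bigr)^{v}-T+\sum_{k=1}^v R_k,$$
which is \eqref{E:second}. The $R_k$ are unchanged from Lemma \ref{P:start} and so still satisfy \eqref{E:boundR}. This is essentially a bookkeeping rearrangement, so there is no serious obstacle; the only point needing a word of care is that the rearrangement of the series into (full series) minus (tail) uses the absolute convergence guaranteed by Lemma \ref{T:coef} — without it, splitting a conditionally convergent double sum would be unjustified, but here $\sum_Q f_G^+(Q)/\cN(Q)<\infty$ was shown, so dominated convergence / unconditional summability applies to the $f_G$ series as well.

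Thus the proof is short: cite \eqref{E:first} and \eqref{E:boundR} from Lemma \ref{P:start}, cite Lemma \ref{T:coef} for the value and absolute convergence of the main coefficient, split off the tail, define $T$ by \eqref{E:defT}, and collect terms to reach \eqref{E:second}.

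\begin{proof}
By Lemma \ref{P:start}, equation \eqref{E:first}, we have
$$g(n)=\Bigl(\frac{q^n}{q-1}\Bigr)^{v}\sum_{\substack{(N_1,\dots,N_e)\\\deg(N_a)\le n}}
\frac{\mu(N_1)\cdots\mu(N_e)}{q^{m_1}\cdots q^{m_v}}+\sum_{k=1}^v R_k,$$
with each $R_k$ satisfying the bound \eqref{E:boundR}.

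By Lemma \ref{T:coef} the series
$$\sum_{(N_1,\dots,N_e)}\frac{\mu(N_1)\cdots\mu(N_e)}{q^{m_1}\cdots q^{m_v}}=\rho_{G,q}$$
converges absolutely, the sum being taken over all edge labelings. Hence we may split it into the part indexed by edge labelings with $\deg(N_a)\le n$ for every edge $a$ and the complementary part indexed by edge labelings for which $\deg(N_a)>n$ for at least one edge $a$:
$$\sum_{\substack{(N_1,\dots,N_e)\\\deg(N_a)\le n}}\frac{\mu(N_1)\cdots\mu(N_e)}{q^{m_1}\cdots q^{m_v}}
=\rho_{G,q}-\sum_{\substack{(N_1,\dots,N_e)\\\text{some }\deg(N_a)> n}}\frac{\mu(N_1)\cdots\mu(N_e)}{q^{m_1}\cdots q^{m_v}}.$$
Multiplying by $\bigl(q^n/(q-1)\bigr)^{v}$ and setting
$$T=\Bigl(\frac{q^n}{q-1}\Bigr)^{v}\sum_{\substack{(N_1,\dots,N_e)\\\text{some }\deg(N_a)> n}}
\frac{\mu(N_1)\cdots\mu(N_e)}{q^{m_1}\cdots q^{m_v}},$$
which is \eqref{E:defT}, we obtain
$$\Bigl(\frac{q^n}{q-1}\Bigr)^{v}\sum_{\substack{(N_1,\dots,N_e)\\\deg(N_a)\le n}}
\frac{\mu(N_1)\cdots\mu(N_e)}{q^{m_1}\cdots q^{m_v}}=\rho_{G,q}\Bigl(\frac{q^n}{q-1}\Bigr)^{v}-T.$$
Substituting this into \eqref{E:first} gives
$$g(n)=\rho_{G,q}\Bigl(\frac{q^n}{q-1}\Bigr)^{v}-T+\sum_{k=1}^v R_k,$$
which is \eqref{E:second}, with $R_k$ satisfying the bound \eqref{E:boundR} as already established in Lemma \ref{P:start}.
\end{proof}
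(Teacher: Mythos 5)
Your proposal is correct and is exactly the argument the paper intends (the paper states this lemma without a written proof, presenting it as the immediate consequence of combining \eqref{E:first} from Lemma \ref{P:start} with Lemma \ref{T:coef}). Your explicit note that the split into truncated sum plus tail is justified by the absolute convergence of $\sum_Q f_G^+(Q)/\cN(Q)$ is a worthwhile addition rather than a deviation.
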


\section{Bound on the error terms}
In this Section we obtain bounds on the terms $T$ and $R_k$ in \eqref{E:second} and \eqref{E:defT}.

\subsection{Bound on $R_k$}

Let $r_q(n)$ be the number of monic irreducible polynomials in $\F_q[z]$ of degree
$n$. It is well known that (see, for example, \cite[Th.~3.25, p.84]{LiNi})
\begin{equation}\label{irredcount}
r_q(n)=\frac{1}{n}\sum_{d\mid n}\mu\(n/d\)q^{d}.
\end{equation}
We will need the following lemma.
\begin{lemma}
\label{r(m)}
The number $r_q(n)$ of irreducible polynomials of degree $=n$ in $\F_q[z]$ is bounded
by $r_q(n)\le \frac1n q^n$.
\end{lemma}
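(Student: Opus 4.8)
The plan is to start from the exact formula \eqref{irredcount} for $r_q(n)$ and simply bound the alternating sum by its dominant term. First I would isolate the $d=n$ contribution, which is $\tfrac1n q^n$, and observe that all remaining terms have $d\mid n$ with $d\le n/2$, so $q^d\le q^{n/2}$. Hence
\begin{equation*}
r_q(n)=\frac1n\sum_{d\mid n}\mu(n/d)q^d\le \frac1n q^n+\frac1n\sum_{\substack{d\mid n\\ d\le n/2}}q^d\le\frac1n q^n+\frac1n\cdot n\cdot q^{n/2},
\end{equation*}
using the crude count that there are at most $n$ divisors. That last inequality does not quite give the claimed clean bound $r_q(n)\le\tfrac1n q^n$, so a slightly more careful estimate is needed: one should sum the geometric-type tail $\sum_{d\le n/2}q^d\le \tfrac{q^{n/2+1}}{q-1}\le 2q^{n/2}$ and then check that $\tfrac1n(q^n - 2q^{n/2})\le r_q(n)\le \tfrac1n q^n$ reduces to verifying $q^n\ge 2q^{n/2}$, i.e. $q^{n/2}\ge 2$, which holds for all $q\ge2$ and $n\ge2$; the small cases $n=1$ (where $r_q(1)=q\le q$) and any $q^{n/2}<2$ must be inspected separately but are trivial.

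The cleaner route, which I would actually adopt, is to avoid the divisor count entirely: drop all negative terms and keep only positive ones, noting the $d=n$ term is positive, to get $n\,r_q(n)=\sum_{d\mid n}\mu(n/d)q^d\le q^n+\sum_{d\mid n,\,d<n}q^d$. Then bound the proper-divisor sum by the full geometric sum $\sum_{d=1}^{\lfloor n/2\rfloor}q^d=\tfrac{q^{\lfloor n/2\rfloor+1}-q}{q-1}<\tfrac{q^{n/2+1}}{q-1}$. It remains to show $\tfrac{q^{n/2+1}}{q-1}\le 0$... which is false, so strictly one proves the weaker-looking but sufficient statement by checking $q^n+\tfrac{q^{n/2+1}}{q-1}\le ?\,$— this still exceeds $q^n$.

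Given this tension, the honest plan is: the inequality $r_q(n)\le\tfrac1n q^n$ is \emph{not} literally true for all $n$ from the naive bound, but it \emph{is} true, and the proof I would give is the elementary one that $n\,r_q(n)$ counts the elements of $\mathbb{F}_{q^n}$ of degree exactly $n$ over $\mathbb{F}_q$ (each irreducible of degree $n$ has $n$ roots, all of exact degree $n$, and these sets are disjoint), so $n\,r_q(n)\le |\mathbb{F}_{q^n}|=q^n$, giving $r_q(n)\le\tfrac1n q^n$ immediately. This field-theoretic argument is clean and avoids any messy estimation of the Möbius sum. The main obstacle, then, is not the mathematics but resisting the temptation to derive it from \eqref{irredcount}: the slick proof is the counting one, $nr_q(n)\le q^n$, and I would present that in two lines rather than grinding through divisor sums. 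As a sanity check one notes equality $nr_q(n)=q^n$ fails (there are elements of $\mathbb{F}_{q^n}$ lying in proper subfields), consistent with the strict inequality whenever $n$ is composite, and with equality-up-to-lower-order when $n$ is prime.
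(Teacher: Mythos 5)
Your final argument is correct, but it is genuinely different from the one in the paper. You prove $n\,r_q(n)\le q^n$ by observing that the roots of the monic irreducibles of degree $n$ are exactly the elements of $\F_{q^n}$ of degree $n$ over $\F_q$, each irreducible contributing $n$ distinct roots and distinct irreducibles having disjoint root sets; this is clean, two lines, and needs nothing beyond basic field theory. The paper instead works directly with the M\"obius formula \eqref{irredcount}: for composite $n$ it takes $p$ to be the \emph{least} prime dividing $n$, so that the second-largest divisor is $n/p$ and enters with coefficient $\mu(p)=-1$, giving $\sum_{d\mid n}\mu(n/d)q^d\le q^n-q^{n/p}+\sum_{d=1}^{n/p-1}q^d=q^n-q^{n/p}+\frac{q^{n/p}-1}{q-1}<q^n$. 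This is exactly the cancellation your first attempt misses: by dropping all negative terms you throw away the $-q^{n/p}$ that is needed to absorb the geometric tail, which is why your naive bound stalls at $\frac1n q^n+O(q^{n/2}/n)$ and why you (correctly) sensed the tension. So the honest summary is: your counting proof is valid and arguably more elementary than the paper's, while the paper's proof shows that the M\"obius route does work provided one keeps the dominant negative term rather than discarding it; the exploratory first half of your write-up should be deleted, since as written it contains a dead end and a self-confessedly false intermediate claim, and only the field-theoretic paragraph constitutes the proof.
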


\begin{proof}
By \eqref{irredcount} we have to prove that $\sum_{d\mid n} \mu(n/d)q^{d}\le q^n$.
If $n=1$ or $n=p$ is a prime number,  this is trivial.
When $n$ is composite
let $p$ be the least prime number dividing $n$. The divisors of $n$ in decreasing order are
$$n, \quad n/p,\quad d_3,\quad d_4,\dots $$
Therefore we have
\begin{align*}
\sum_{d\mid n} \mu\(n/d\)q^{d}&=q^n-q^{n/p}+\sum_{k\ge 3} \mu(n/d_k) q^{d_k}\\
&\le q^n-q^{n/p}+\sum_{d=1}^{n/p-1}q^d=q^n-q^{n/p}+\frac{q^{n/p}-1}{q-1}<q^n.
\end{align*}
\end{proof}
We now state and prove a bound on $R_k$.
\begin{lem}
Let $d$ be the maximum degree of a vertex in the graph $G$. The error terms
$R_k$ are bounded by
\begin{equation}\label{RkB}
\Bigl|\sum_{k=1}^vR_k\Bigr|\le \exp(d) 2^{2^e}v n^d\,\Bigl(\frac{q^n}{q-1}\Bigr)^{v-1}.
\end{equation}
\end{lem}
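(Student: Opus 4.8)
The plan is to start from the bound \eqref{E:boundR} on each $R_k$ and recognize the inner sum as a norm of an element of the ring of ideals. Fix $k$; then
$$|R_k|\le\Bigl(\frac{q^n}{q-1}\Bigr)^{v-1}\sum_{\substack{(N_1,\dots,N_e)\\\deg(N_a)\le n}}\frac{|\mu(N_1)\cdots\mu(N_e)|}{q^{m_1}\cdots\widehat{q^{m_k}}\cdots q^{m_v}},$$
and dropping the degree restriction on the $N_a$ only increases the sum (all terms are nonnegative). The resulting unrestricted sum is exactly $\cN^1\bigl(g_{G,k}^+\text{-generating element}\bigr)$ in the notation of Section~\ref{SS:RD}: grouping terms by the value of $M_1\cdots\widehat{M_k}\cdots M_v=Q$ gives $\sum_Q g_{G,k}^+(Q)/\cN(Q)$. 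Since $g_{G,k}^+$ is multiplicative (Lemma~\ref{P:Mul}), I would apply the Euler product (Lemma~\ref{P:Euler}) and then take norms to get
$$\sum_{(N_1,\dots,N_e)}\frac{|\mu(N_1)\cdots\mu(N_e)|}{q^{m_1}\cdots\widehat{q^{m_k}}\cdots q^{m_v}}=\prod_P\Bigl(1+\frac{g_{G,k}^+(P)}{q^{\deg P}}+\frac{g_{G,k}^+(P^2)}{q^{2\deg P}}+\cdots\Bigr).$$

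Next I would insert the arithmetic data from Lemma~\ref{L:poly-vertex}: $g_{G,k}^+(P)=d_k\le d$, and $g_{G,k}^+(P^m)=0$ for $m\ge v$; moreover the sum of all coefficients of $Q_{G,k}^+(z)$ is the number of subsets of $E$, namely $2^e$, so each local factor is $\le 1+d_k/q^{\deg P}+2^e\sum_{m\ge 2}q^{-m\deg P}$. It is cleanest to split the product into the contribution of the linear term and the contribution of the higher-order terms. For the linear part, $\prod_P\bigl(1+d_k q^{-\deg P}\bigr)\le\prod_P(1+d q^{-\deg P})$; grouping by degree and using Lemma~\ref{r(m)} ($r_q(j)\le q^j/j$) and $1+t\le e^t$ gives $\prod_{j\ge1}(1+d q^{-j})^{q^j/j}\le\exp\bigl(d\sum_{j\ge1}1/j\cdot\text{(a convergent tail)}\bigr)$ — but this naive bound diverges, so the honest route is to keep the contribution of small-degree primes separate and note there are at most $q^j/j$ of them. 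Actually the right move here is to observe that only finitely many primes matter for a polynomial bound: primes $P$ with $\deg P>n$ contribute a factor $1+O(q^{-\deg P})$ and there are $\le q^{\deg P}/\deg P$ of them per degree, so the tail product over $\deg P>n$ is $1+o(1)$; primes with $\deg P\le n$ contribute, each, a factor bounded by $2^{2^e}$ (crudely, $1+2^e\sum_{m\ge1}q^{-m}\le 2^{2^e}$ after enlarging), and there are $\sum_{j\le n}r_q(j)\le\sum_{j\le n}q^j/j$ of them. The polynomial-in-$n$ behaviour $n^d$ must come from the linear term $d_k q^{-\deg P}$ contributing multiplicatively across the $\le q^j/j$ primes of each degree $j\le n$: $\prod_{j=1}^n(1+d q^{-j})^{q^j/j}$, and here $\log$ of this is $\le d\sum_{j=1}^n\frac1j\le d(\log n+1)$, giving a factor $\le e^d\cdot n^d$. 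This is the calculation that produces the claimed $\exp(d)\,n^d$.

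Assembling: each $|R_k|\le(q^n/(q-1))^{v-1}\cdot e^d n^d\cdot 2^{2^e}$ (the $2^{2^e}$ absorbing the bounded higher-order local factors and the convergent tail over $\deg P>n$), and summing over $k=1,\dots,v$ gives the stated bound
$$\Bigl|\sum_{k=1}^v R_k\Bigr|\le\exp(d)\,2^{2^e}\,v\,n^d\Bigl(\frac{q^n}{q-1}\Bigr)^{v-1}.$$
The main obstacle I anticipate is precisely the bookkeeping in the Euler product: one must be careful that the divergent-looking $\sum_P q^{-\deg P}$ is tamed by truncating at $\deg P\le n$ (where the harmonic sum $\sum_{j\le n}1/j$ gives the $\log n$, hence $n^d$), while the genuinely convergent pieces — the higher powers $P^m$, $m\ge2$, and the primes of degree $>n$ — are bounded by an absolute constant depending only on $e$, which is where the crude $2^{2^e}$ comes from. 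Getting these three regimes cleanly separated, and checking that the constant that multiplies $n^d$ is exactly $\exp(d)2^{2^e}$ rather than something larger, is the delicate part; everything else is the routine multiplicativity-plus-Euler-product machinery already set up in the preceding lemmas.
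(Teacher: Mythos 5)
Your overall strategy (recognize the inner sum in \eqref{E:boundR} as a norm, pass to an Euler product via the multiplicativity of $g_{G,k}^+$, read off $g_{G,k}^+(P)=d_k$ and the coefficient sum $2^e$ from Lemma \ref{L:poly-vertex}, and extract $\exp(d)n^d$ from $\sum_{j\le n}1/j$) is the paper's strategy, and your final numerical assembly matches the paper's. But there is a genuine gap at the one step that makes the argument work: the justification for truncating the Euler product at $\deg P\le n$. You begin by \emph{dropping} the restriction $\deg(N_a)\le n$, arguing that this only increases a sum of nonnegative terms. That is true, but the resulting unrestricted sum is infinite: already for a single edge it dominates $\sum_P q^{-\deg P}=\sum_{j\ge1}r_q(j)q^{-j}\asymp\sum_j 1/j$, which diverges. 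Your subsequent claim that the tail product over $\deg P>n$ is $1+o(1)$ is therefore false --- its logarithm is $\gtrsim d_k\sum_{j>n}1/j=\infty$ --- so the route as written does not close.

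The fix, which is how the paper proceeds, is to \emph{keep} the restriction $\deg(N_a)\le n$ and use it to truncate the product rather than trying to truncate an already-divergent unrestricted product: every prime $P$ dividing any $M_r$ in a term of the restricted sum divides some $N_a$ with $\deg(N_a)\le n$, hence $\deg(P)\le n$; consequently the restricted element $W$ satisfies $W\vartriangleleft\prod_{\deg(P)\le n}\bigl(1+g_{G,k}^+(P)/P+g_{G,k}^+(P^2)/P^2+\cdots\bigr)$ (the $\vartriangleleft$ rather than equality absorbing the mismatch between ``$\deg(N_a)\le n$'' and ``all prime factors of degree $\le n$''). From that point on your computation $\log\cN(W)\le d_k(1+\log n)+2^e\log 2$, hence $|R_k|\le\exp(d_k)n^{d_k}2^{2^e}\bigl(q^n/(q-1)\bigr)^{v-1}$, is exactly right, and summing over the $v$ values of $k$ with $d_k\le d$ gives \eqref{RkB}. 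So the gap is localized but real: the truncation must come from the degree restriction you discarded, not from a (nonexistent) convergence of the tail.
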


\begin{proof}
The proof is similar to the proof of Lemma \ref{T:coef}.
We consider the element of the ring of ideals
$$W=\sum_{\substack{(N_1,\dots,N_e)\\\deg(N_a)\le n}}
\frac{|\mu(N_1)\cdots\mu(N_e)|}{M_1\cdots \widehat{M_k}\cdots M_v},$$
whose norm is equal to the sum appearing in \eqref{E:boundR}. That is,
$$\cN(W)=\sum_{\substack{(N_1,\dots,N_e)\\\deg(N_a)\le n}}
\frac{|\mu(N_1)\cdots\mu(N_e)|}{q^{m_1}\cdots \widehat{q^{m_k}}\cdots q^{m_v}}.$$

The primes $P$ that divide the ideals $M_1\cdots \widehat{M_k}\cdots M_v$ appearing
in the denominators of $W$ divide some of the $N_a$. Therefore $\deg(P)\le n$.
It follows that
$$W\vartriangleleft \sideset{}{^*}\sum_{Q}\frac{1}{Q}
\Bigl(\sum_{M_1\cdots \widehat{M_k}\cdots M_v=Q}|\mu(N_1)\cdots\mu(N_e)|\Bigr),$$
where the $*$ in the sum indicates that we restrict the sum to monic polynomials
all of whose prime factors have degree $\le n$. This is not an equality because
there may be some edge labelings $(N_1,\dots, N_e)$ that satisfy
$M_1\cdots \widehat{M_k}\cdots M_v=Q$  that are not contained in $W$ because of  the restriction
on the degrees.

Noticing the definition of $g^+_{G,k}(Q)$ we have
$$\sideset{}{^*}\sum_{Q}\frac{1}{Q}
\Bigl(\sum_{M_1\cdots \widehat{M_k}\cdots M_v=Q}|\mu(N_1)\cdots\mu(N_e)|\Bigr)=
\sideset{}{^*}\sum_{Q}\frac{g^+_{G,k}(Q)}{Q}.$$

Reasoning as in the Euler product proof we obtain
$$W\vartriangleleft\prod_{\deg(P)\le n}\Bigl(1+\frac{g^+_{G,k}(P)}{P}+
\frac{g^+_{G,k}(P^2)}{P^2}+\cdots\Bigr).$$
By Lemma \ref{L:poly-vertex} we have $g^+_{G,k}(P)=d_k$, the degree of the vertex $k$ in $G$.
And $g^+_{G,k}(P^m)=0$ for $m\ge v$.  Let $C=Q^+_{G,k}(1)\le 2^e$ be the sum of $g^+_{G,k}(P^m)$
for $0\le m\le v$. Taking norms in the above relation $\vartriangleleft$, yields
$$\cN(W)\le \prod_{m=1}^n \Bigl(1+\frac{d_k}{q^m}+\frac{C}{q^{2m}}\Bigr)^{r(m)},$$
where $r(m)$ is the number of monic irreducible polynomials of degree $m$.

From Lemma \ref{r(m)} we have $r(m)\le \frac1mq^m$, and so
\begin{align*}\log \cN(W)\le \sum_{m=1}^n \frac{q^m}{m}\log \Bigl(1+\frac{d_k}{q^m}+
\frac{C}{q^{2m}}\Bigr)\le \sum_{m=1}^n \frac{q^m}{m}\Bigl(\frac{d_k}{q^m}+
\frac{C}{q^{2m}}\Bigr)\\
=d_k\sum_{m=1}^n\frac1m+\sum_{m=1}^n \frac{2^e}{mq^m}\le (1+\log n)d_k+2^e \log2.
\end{align*}

Using the bound on $\cN(W)$ in \eqref{E:boundR} yields
$$|R_k|\le 2^{2^e} (\exp(1) n)^{d_k}\Bigl(\frac{q^n}{q-1}\Bigr)^{v-1}.$$
This proves our lemma.
\end{proof}

\subsection{Bound on the error term $T$}
To bound the error term $T$ we require the following lemma.

\begin{lemma}
For any squarefree polynomial $Q\in \F_q[z]$ of degree $n>1$ we have
\begin{equation}\label{E:ogbn}
\omega(Q)\le 4\frac{n}{\log n}\log q.
\end{equation}
\end{lemma}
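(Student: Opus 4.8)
The plan is to bound $\omega(Q)$ by comparing the factorization of $Q$ into distinct irreducible polynomials with the count of how many irreducibles of small degree are available. If $Q$ is squarefree of degree $n$ and $\omega(Q)=k$, then $Q$ is a product of $k$ distinct monic irreducibles $P_1,\dots,P_k$, so $n = \sum_{i=1}^k \deg(P_i)$. To make $\omega(Q)$ as large as possible for a given degree budget $n$, one uses irreducibles of the smallest possible degrees; thus the extremal situation is to take all irreducibles of degree $1$, then all of degree $2$, and so on up to some degree $t$, stopping when the running total of degrees reaches $n$. This gives the key inequality: if $k=\omega(Q)$ then
\begin{equation}\label{E:budget}
n \ge \sum_{j=1}^{t-1} j\, r_q(j)
\end{equation}
where $t$ is the least integer for which the number of irreducibles of degree $\le t$ is at least $k$, i.e. $\sum_{j=1}^{t} r_q(j) \ge k > \sum_{j=1}^{t-1} r_q(j)$.

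First I would make this precise. Since $r_q(j) \ge \tfrac{1}{2j}q^{j}$ for all $j\ge 1$ (the standard lower bound from \eqref{irredcount}, valid since the subtracted terms are geometrically dominated), the left side of \eqref{E:budget} is at least $\tfrac12\sum_{j=1}^{t-1} q^{j} \ge \tfrac{1}{4}q^{t-1}$ once $t\ge 2$, say. Hence $q^{t-1} \le 4n$, so $t \le 1 + \log(4n)/\log q$. On the other hand, $k \le \sum_{j=1}^{t} r_q(j) \le \sum_{j=1}^{t}\tfrac1j q^{j} \le \sum_{j=1}^t q^j \le \tfrac{q}{q-1}q^{t} \le 2q^{t}$. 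Combining, $k \le 2q \cdot q^{t-1} \le 2q\cdot 4n = 8qn$; but this is far too weak — the point is that $q^{t}$ is roughly $4qn$, and we want $k$ in terms of $n/\log n$, not $n$. So I would instead keep the sharper bound $k \le \sum_{j=1}^{t} r_q(j) \le \sum_{j=1}^t \tfrac1j q^j$. The dominant term is $\tfrac1t q^t$, and $\sum_{j=1}^{t}\tfrac1j q^j \le \tfrac1t q^t \cdot \tfrac{q}{q-1}\cdot(1+o(1))$; more cleanly, $\sum_{j=1}^{t}\tfrac1j q^j \le \tfrac{2}{t}q^{t}$ for $q\ge 2$ and $t\ge 1$ (bounding the geometric tail). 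Since $q^{t} = q\cdot q^{t-1} \le 4qn$, we get $k \le \tfrac{2}{t}\cdot 4qn = \tfrac{8qn}{t}$. Finally, from $q^{t-1}\le 4n$ one also needs $t$ bounded below: from $q^{t} \ge$ (something like) the total degree used, i.e. $\sum_{j=1}^{t-1}j r_q(j) \le n < \sum_{j=1}^{t}j r_q(j)$ gives $n < \sum_{j=1}^t j r_q(j) \le \sum_{j=1}^t q^j \le 2q^t$, so $q^t \ge n/2$, hence $t \ge \log(n/2)/\log q \ge \tfrac{\log n}{2\log q}$ for $n$ large. Substituting, $k \le 8qn \cdot \tfrac{2\log q}{\log n} = \tfrac{16 q \log q}{\log n} n$, which still carries an unwanted factor of $q$.

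The main obstacle, as the computation above reveals, is shaving the dependence on $q$ down to the constant $4$ claimed in \eqref{E:ogbn}. The fix is to not bound $q^t$ via the crude $q^t \le 4qn$ but to track constants honestly: from $n \ge \sum_{j=1}^{t-1}j r_q(j) \ge \sum_{j=1}^{t-1} j\cdot\tfrac{1}{2j}q^j = \tfrac12\sum_{j=1}^{t-1}q^j \ge \tfrac12 q^{t-1}$, so $q^{t-1}\le 2n$, i.e. $q^t \le 2qn$; and $k \le \sum_{j=1}^t \tfrac1j q^j \le \tfrac{1}{t}\sum_{j=1}^t q^j \le \tfrac{1}{t}\cdot\tfrac{q}{q-1}q^t$. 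With $q^t = q\cdot q^{t-1}$ this needs more care; alternatively one splits the sum $\sum_{j=1}^t \tfrac1j q^j$ at $j = t/2$: the tail $t/2 < j \le t$ contributes at most $\tfrac{2}{t}\sum_{j\le t} q^j \le \tfrac{2}{t}\cdot 2q^t$, and the head contributes at most $q^{t/2}\cdot t$ which is negligible (it is $O(\sqrt{n}\log n)$, much smaller than $n/\log n$ once $n$ is large). So $k \le \tfrac{4}{t}q^t + O(\sqrt n \log n)$. Now use $q^{t-1}\le 2n$ together with the fact that we may as well assume $t \ge 2$ (else $Q$ is a product of irreducibles of degree $1$ only, of which there are exactly $q$, giving $\omega(Q)\le q \le n$ trivially when — hmm, this edge case also needs $q \le 4n/\log n$, so for small $n$ one handles it by hand or absorbs it into the constant), whence $q^t \le 2qn$ gives $k \le \tfrac{8q n}{t}$. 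The honest route to kill the $q$: use $t \ge \log(q^{t-1})/\log q$ together with a lower bound $q^{t-1} \ge$ (a constant times) $qn/\log n$ — this is where one must argue that $t$ cannot be too small, namely that $\sum_{j=1}^{t-1}j r_q(j)$ cannot reach $n$ if $q^{t-1}$ is much smaller than $n$, which is exactly $q^{t-1}\le 2n$ read the other way only after one also knows $q^{t}\ge cn$ for the stopping condition. Putting $q^{t-1}\le 2n$ into $t\ge \log(q^{t-1})/\log q$ is the wrong direction; instead one wants $q^{t-1} \ge$ roughly $n$, which would follow if the stopping happened because the \emph{degree budget} $\sum j r_q(j)$ rather than being merely $\ge n$ were close to $n$, i.e. $\sum_{j=1}^{t-1} j r_q(j) \le n$ and the last added block $t\cdot r_q(t)$ is comparable, so $n \le \sum_{j=1}^t j r_q(j) \le 2\sum_{j=1}^{t-1}j r_q(j) + t r_q(t)$ — messy but it works out, because $t r_q(t) \le q^t = q\cdot q^{t-1} \le 2qn$ is too big; the clean statement is $\sum_{j=1}^{t} j r_q(j) \ge t r_q(t) \ge \tfrac12 q^t$, so $q^t \le 2n$ (!), and then $k \le \sum_{j=1}^t r_q(j) \le \sum_{j=1}^t \tfrac1j q^j \le \tfrac{2}{t} q^t \le \tfrac{4n}{t}$, and $t \ge \log(q^t)/\log q \ge \log(n/2)/\log q \ge \tfrac{\log n}{2\log q}$ for $n\ge 4$, giving finally $\omega(Q) = k \le \tfrac{4n}{t} \le \tfrac{8 n\log q}{\log n}$. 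To reach the stated constant $4$ one uses the sharper $\sum_{j\le t}\tfrac1j q^j \le \tfrac1t q^t(1 + O(1/t))$ and the sharper stopping bound $q^t \le n(1+o(1))$, so that for $n$ sufficiently large $\omega(Q) \le (1+o(1))\tfrac{n\log q}{\log n} \le 4\tfrac{n}{\log n}\log q$; small $n>1$ are then checked directly (for $n>1$ the only way $4n/\log n < \omega(Q)$ could fail is examined against the explicit counts of irreducibles of degrees $1,2$). I would organize the final write-up around the single clean chain: $q^{t}\le 2n$ from the degree budget, $k \le \tfrac{4}{t}q^{t}$ from counting, and $t \ge \tfrac{\log n}{2\log q}$, yielding \eqref{E:ogbn} with room to spare after handling small $n$.
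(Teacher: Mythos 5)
Your underlying idea --- that a squarefree $Q$ of degree $n$ with $k$ distinct irreducible factors forces $n$ to be at least the total degree of the $k$ ``cheapest'' irreducibles, so that $k$ is controlled by counting irreducibles of low degree --- is sound and genuinely different from the paper's argument. But as written your final ``clean chain'' does not close, because it silently uses two incompatible definitions of the stopping index $t$. If $t$ is, as you define it, the least integer with $\sum_{j\le t}r_q(j)\ge k$, then $t$ depends only on $k$ and $q$, and the lower bound $t\ge\frac{\log n}{2\log q}$ is simply false: take $Q$ a single irreducible of degree $n$, so $k=1$ and $t=1$ while $n$ is arbitrarily large. The inequality $q^{t}\ge n/2$ that you invoke comes from a different $t$, namely the least integer with $\sum_{j\le t}j\,r_q(j)>n$ (a degree-budget threshold); with that $t$ the counting bound must read $k\le\sum_{j\le t-1}r_q(j)+n/t$ (the partial block at degree $t$ contributes up to $n/t$ extra factors), and the upper bound you can actually extract is $q^{t-1}\le 2n$, not $q^{t}\le 2n$. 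Tracking these corrections honestly, the chain yields a constant visibly larger than $4$, and your plan to recover $4$ (``use the sharper $\sum_{j\le t}\tfrac1jq^j\le\tfrac1tq^t(1+O(1/t))$ and check small $n$ directly'') is left unexecuted; note also that the intermediate inequality $\sum_{j\le t}\tfrac1jq^j\le\tfrac2tq^t$ you assert is already false for $q=2$, $t=3$ (the left side is $20/3$, the right side $16/3$). So there is a genuine gap: the bound with constant $4$, valid for every $n>1$, is never actually established.

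For comparison, the paper avoids the stopping-index bookkeeping entirely by a threshold split: with $x=\frac12\frac{\log n}{\log q}$ (and the case $x<1$ handled trivially by $\omega(Q)\le n\le n/x$), write $Q=Q_1Q_2$ where $Q_1$ collects the irreducible factors of degree $\le x$ and $Q_2$ those of degree $>x$; then $\omega(Q_2)<n/x=2\frac{n}{\log n}\log q$ at once, while $\omega(Q_1)\le\sum_{k\le x}r_q(k)\le q^x(1+\log x)=\sqrt{n}\,(1+\log x)$, and the elementary inequality $y(1+\log y)\le 2^y$ absorbs this second term into another $2\frac{n}{\log n}\log q$. If you want to salvage your route, fix one definition of $t$ (the degree-budget one), accept the larger constant it produces, and verify that it still suffices for the application; but matching the stated constant $4$ for all $n>1$ is considerably easier by the paper's split.
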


\begin{proof}
Let $x=\frac12\frac{\log n}{\log q}$. We may assume $x\ge1$, for otherwise
$$\omega(Q)\le n\le \frac{n}{x}=2\frac{n}{\log n}\log q<4\frac{n}{\log n}\log q.$$

So we assume $x\ge 1$. Since  $Q$ is squarefree we have
$Q=Q_1 Q_2$ where $Q_1$ is the product of
irreducible polynomials $P\mid Q$ with degree  $\le x$ and $Q_2$ is the product of
irreducible polynomials $P\mid Q$ with degree $> x$.

The polynomial $Q_1$ is a divisor of $R_1$, the product of all irreducibles of degree
$\le x$. That is,
$$R_1=\prod_{k\le x}\prod_{\deg(P)=k}P.$$
We have
$$\omega(R_1)=\sum_{k\le x} r_q(k)\quad\text{and}\quad
\deg(R_1)=\sum_{k\le x} k r_q(k).$$
All irreducible factors of $Q_2$ have degree $> x$. Therefore
$$x\omega(Q_2)< \deg(Q_2)=n-\deg(Q_1).$$
Therefore
$$\omega(Q)=\omega(Q_1)+\omega(Q_2)\le \omega(R_1)+\frac{n-\deg(Q_1)}{x}
\le \frac{n}{x}+ \sum_{k\le x} r_q(k),$$
where $r_q(n)$ is the number of monic irreducible polynomials in $\F_q[z]$ of degree
$n$. By Lemma \ref{r(m)}  we have $r_q(k)\le \frac1k q^k$, so that for $x\ge1$ we obtain
$$\sum_{k\le x} r_q(k)=\sum_{k\le x} \frac{1}{k}q^k\le q^x(1+\log x).$$

Since $x=\frac12\frac{\log n}{\log q}$
\begin{align*}
\omega(Q)&\le 2\frac{n}{\log n}\log q+(1+\log x)\exp\Bigl(\frac12\frac{\log n}{\log q}\log q\Bigr)\\&=2\frac{n}{\log n}\log q+n^{\frac12}\Bigl(1+\log\Bigl(\frac12\frac{\log n}{\log q}\Bigr)\Bigr)\le 4\frac{n}{\log n}\log q.
\end{align*}
Note that
$$n^{\frac12}\Bigl(1+\log\Bigl(\frac12\frac{\log n}{\log q}\Bigr)\Bigr)\le
n^{\frac12}\Bigl(1+\log\Bigl(\frac12\frac{\log n}{\log 2}\Bigr)\Bigr),$$
and
$$\frac{n}{\log n}\log 2\le 2\frac{n}{\log n}\log q.$$
So it will suffice to show that
$$n^{\frac12}\Bigl(1+\log\Bigl(\frac12\frac{\log n}{\log 2}\Bigr)\Bigr)\le 2\frac{n}{\log n}\log 2.$$
This is equivalent to
$$y(1+\log y)\le 2^y, \qquad \text{for}\quad y=\frac12\frac{\log n}{\log 2}\ge x\ge1,$$
which is easily shown to be true.
\end{proof}

We now prove the following bound on $T$.

\begin{lem}
Given any $0<\varepsilon<\frac12$
the term $T$ defined in \eqref{E:defT} is bounded by
\begin{equation}\label{E:TB}
|T|\le 3\sum_{j=1}^e\rho_{G_j',q}^+ \Bigl(\frac{q^n}{q-1}\Bigr)^{v}  q^{-(1-\varepsilon)n},\qquad n>n_0,
\end{equation}
where $G'_j$ is the graph obtained from $G$ after removing the edge $e_j$, and $n_0$ depends on
$\varepsilon$ and the number of vertices in the graph $G$.
\end{lem}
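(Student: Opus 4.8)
The plan is to start from the definition \eqref{E:defT} of $T$ and strip the common factor $(q^n/(q-1))^v$ out, so that it remains to bound the tail sum
$$
S:=\sum_{\substack{(N_1,\dots,N_e)\\\text{some }\deg(N_a)>n}}
\frac{|\mu(N_1)\cdots\mu(N_e)|}{q^{m_1}\cdots q^{m_v}}.
$$
By a union bound over which edge carries the large label, $S\le\sum_{j=1}^e S_j$, where $S_j$ is the same sum but with the single constraint $\deg(N_j)>n$ (and no constraint on the other $N_a$). Since the summand is multiplicative in the sense controlled by Lemma \ref{P:Mul}, I would rewrite $S_j$ by first fixing the ``large'' label $N_j=P_1\cdots P_t$ (a product of distinct irreducibles of total degree $>n$) and summing over the remaining edge labels of the graph $G'_j$ obtained by deleting edge $e_j$. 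The point is that once $N_j$ is fixed, the two endpoints $r,s$ of $e_j$ are forced to be divisible by $N_j$, so the factors $q^{-m_r},q^{-m_s}$ each contribute at most $q^{-\deg(N_j)}\le q^{-n}$ relative to what they would be in $G'_j$; absorbing one factor of $q^{-\deg N_j}$ and comparing the rest to the full series $\rho^+_{G'_j,q}=\cN(Z^+)$ for the graph $G'_j$ (finite by Lemma \ref{T:coef} applied to $G'_j$), I get
$$
S_j\le q^{-n}\,\rho^+_{G'_j,q}\!\!\sum_{\substack{N\text{ squarefree}\\ \deg N>n}}\frac{1}{q^{\deg N}}\cdot(\text{something absorbing }\omega(N)).
$$

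The subtlety — and the main obstacle — is that when $N_j$ has $\omega(N_j)=t$ prime factors, each of those primes may also appear in other edge labels incident to other vertices, and more importantly the naive bound $q^{-m_r-m_s}\le q^{-2\deg N_j}$ over-counts the cancellation; what one really controls cleanly is just one factor $q^{-\deg N_j}$ saved over the $G'_j$-series, times a combinatorial factor at most $2^{\omega(N_j)}$ (the number of ways the primes of $N_j$ can be distributed among incident edges). This is exactly where the preceding lemma on $\omega(Q)\le 4n/\log n\cdot\log q$ enters: for $\deg N=k>n$ we have $2^{\omega(N)}\le 2^{4k\log q/\log k}=q^{4k/\log k}=q^{o(k)}$, i.e. $2^{\omega(N)}\le q^{\varepsilon k}$ once $k>n\ge n_0(\varepsilon)$. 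Hence the tail is dominated by
$$
\sum_{k>n}2^{\omega}\,r_q(k)\,q^{-k}\ \le\ \sum_{k>n}q^{\varepsilon k}\cdot\frac{q^k}{k}\cdot q^{-k}\ \le\ \sum_{k>n}q^{\varepsilon k-\log_q k}\ \ll\ q^{-(1-\varepsilon)n},
$$
after also using Lemma \ref{r(m)} for the count of squarefree polynomials of given degree and summing the resulting geometric-type tail (this is where $\varepsilon<\tfrac12$ and the $n>n_0(\varepsilon,v)$ hypothesis are consumed, to guarantee convergence and to beat the constant $3$ in the final inequality).

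Putting the pieces together: $S\le\sum_{j=1}^e S_j\le \sum_{j=1}^e \rho^+_{G'_j,q}\cdot q^{-n}\cdot(\text{tail})\le 3\sum_{j=1}^e\rho^+_{G'_j,q}\,q^{-(1-\varepsilon)n}$ for $n>n_0$, and multiplying back by $(q^n/(q-1))^v$ and taking absolute values (using $|\mu|\ge|\mu\cdots\mu|$ termwise so that the sum with absolute values dominates $|T|/(q^n/(q-1))^v$) gives precisely \eqref{E:TB}. I expect the bookkeeping of ``which factor of $q^{-\deg N_j}$ to save and how the remaining labels reduce to the graph $G'_j$'' to be the only genuinely delicate point; the analytic tail estimate is routine once the $\omega$-bound is in hand, and the comparison to $\rho^+_{G'_j,q}$ is just the Euler-product argument of Lemma \ref{T:coef} run on a subgraph.
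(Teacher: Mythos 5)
Your overall architecture matches the paper's: a union bound over which edge carries the oversized label, a reduction of the remaining labels to the deleted-edge graph $G'_j$ with constant $\rho^+_{G'_j,q}$, and the lemma $\omega(Q)\le 4\tfrac{n}{\log n}\log q$ to control a $2^{O(\omega(N_j))}$ combinatorial factor. But there is a genuine gap at the quantitative heart of the argument, and it is precisely at the point you flag as the "subtlety." You decide that one can only cleanly save a single factor $q^{-\deg N_j}$ from the two endpoints of $e_j$, and you then estimate the tail as $\sum_{k>n} 2^{\omega}\, r_q(k)\, q^{-k}$. This fails twice over. First, $N_j$ ranges over \emph{squarefree} polynomials of degree $k$, of which there are about $q^k(1-1/q)$, not the $\le q^k/k$ of Lemma \ref{r(m)}, which counts \emph{irreducibles}. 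Second, and fatally, with the correct count $\asymp q^k$ and only one saved factor $q^{-k}$, the tail becomes $\sum_{k>n} q^{\varepsilon k}$ (or even with your undercount, $\sum_{k>n} q^{\varepsilon k}/k$), which diverges. There is no way to reach $q^{-(1-\varepsilon)n}$ from a saving of only $q^{-\deg N_j}$.

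The paper's proof shows that the full saving $q^{-2\deg N_1}$ is in fact available and that your worry about "over-counting the cancellation" is unfounded. Writing $D_j=\gcd(N_1,N_j)$ and $N_j=D_jN'_j$ for $j\ge 2$, one gets the exact identities $M_r=N_1\lcm(N'_{\alpha_1},\dots,N'_{\alpha_k})$ and $M_s=N_1\lcm(N'_{\beta_1},\dots,N'_{\beta_\ell})$, so the factor $1/N_1^2$ comes out of the denominator exactly, while the $D_j$ (divisors of $N_1$) that contaminate the other vertices are collected into the multiplicative function $f_{r,s}$ of Definition \ref{D:frs}, whose value on a squarefree $N_1$ has norm at most $2^{e\,\omega(N_1)}$. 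That is where the $2^{a\omega}$ factor lives — it is the price of the bookkeeping, not a substitute for the second power of $N_1$. With both savings in hand the tail is $\sum_{m>n} q^{m}\cdot q^{-2m}\cdot q^{\varepsilon m}=\sum_{m>n}q^{-(1-\varepsilon)m}\le 3q^{-(1-\varepsilon)n}$, which is exactly Lemma \ref{P:thebound}. To repair your argument you must carry out this gcd-decomposition (or an equivalent device) so as to retain $N_1^{-2}$; as written, your tail estimate does not converge.
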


\begin{proof}
By the definition \eqref{E:defT} of $T$ we have

\begin{align*}
|T|&\le \Bigl(\frac{q^n}{q-1}\Bigr)^{v}\sum_{
\substack{(N_1,\dots,N_e)\\\text{some }\deg(N_a)> n}}
\frac{|\mu(N_1)\cdots\mu(N_e)|}{q^{m_1}\cdots q^{m_v}}\\
&\le\Bigl(\frac{q^n}{q-1}\Bigr)^{v}\sum_{a=1}^e
\sum_{
\substack{(N_1,\dots,N_e)\\\deg(N_a)> n}}
\frac{|\mu(N_1)\cdots\mu(N_e)|}{q^{m_1}\cdots q^{m_v}}.
\end{align*}
This is only an inequality because some edge labelings $(N_1,\dots,N_e)$
may have more than one $N_a$ of degree $>n$.
We have to bound each of the $e$ sums. They are all equivalent, in fact
the index of the edges is arbitrary.  Therefore we only bound the one
with $\deg(N_1)>n$. This simplifies our notations a little.

Therefore we bound the sum
$$S:=\sum_{
\substack{(N_1,\dots,N_e)\\\deg(N_1)> n}}
\frac{|\mu(N_1)\cdots\mu(N_e)|}{q^{m_1}\cdots q^{m_v}},$$
and the corresponding element of the ring of ideals
$$W:=\sum_{
\substack{(N_1,\dots,N_e)\\\deg(N_1)> n}}
\frac{|\mu(N_1)\cdots\mu(N_e)|}{M_1\cdots M_v},$$
with $\cN(W)=S$.

The edge $e_1=\{r,s\}$ plays a special role in $W$. We treat this edge
differently from the others. Let $(N_1,\dots,N_e)$ be an edge labeling with
$\deg(N_1)>n$ and  squarefree $N_a$ so that the corresponding term in $W$
is not null.
By definition of the associated labeling (Definition \ref{D:assoc}),
$$M_r=\lcm(N_1,N_{\alpha_1},\dots,N_{\alpha_k}),\quad
M_s=\lcm(N_1,N_{\beta_1},\dots,N_{\beta_\ell}),$$
where we may have $k=0$ or $\ell=0$.

For any other edge $2\le j\le e$ we define $D_j=\gcd(N_1, N_j)$ and then
\begin{equation}\label{E:detail}
N_j=D_jN'_j,\quad D_j\mid N_1.
\end{equation}
Since we assume that $N_j$ is squarefree we have $\gcd(N_1,N'_j)=1$. It is
clear that
\begin{align*}
M_r&=\lcm(N_1,D_{\alpha_1}N'_{\alpha_1},\dots, D_{\alpha_k}N'_{\alpha_k})
=N_1\lcm(N'_{\alpha_1},\dots,N'_{\alpha_k}),\\
M_s&=N_1\lcm(N'_{\beta_1},\dots,N'_{\beta_\ell}).
\end{align*}
For any other vertex $t\ne r$ and $t\ne s$ we have
\begin{align*}
M_t&=\lcm(N_{t_1},\dots, N_{t_m})=\lcm(D_{t_1}N'_{t_1},\dots,D_{t_m}N'_{t_m})\\
&=
\lcm(D_{t_1},\dots,D_{t_m})\lcm(N'_{t_1},\dots,N'_{t_m}),
\end{align*}
where $m$ will depend on $t$.

It follows that the given term of $W$ satisfies
\begin{multline*}
\frac{|\mu(N_1)\cdots\mu(N_e)|}{M_1\cdots M_v}\vartriangleleft
\frac{\mu(N_1)}{N_1^2}
\frac{|\mu(N_2)\cdots \mu(N_e)|}{\lcm(N'_{\alpha_1},\ldots,N'_{\alpha_k})\lcm(N'_{\beta_1},\ldots,N'_{\beta_l})}
\\ \cdot\prod_{\substack{1 \le t \le v\\t\ne r,~t \ne s}} \frac{1}{\lcm(D_{t_1},\ldots,D_{t_m})\lcm(N'_{t_1},\ldots,N'_{t_m})}.
\end{multline*}

This is true even if the $N_j$ are not squarefree because in this case both members
are equal to $0$.

We have
\begin{multline*}
|\mu(N_2)\cdots \mu(N_e)|=|\mu(D_2N'_2)\cdots \mu(D_eN'_e)|\\\le
|\mu(D_2)\cdots\mu(D_e)|\times |\mu(N'_2)\cdots \mu(N'_e)|.
\end{multline*}
It follows that
\begin{multline*}
\frac{|\mu(N_1)\cdots \mu(N_e)|}{M_1\cdots M_v}
\vartriangleleft
\frac{|\mu(N_1)|}{N_1^2}|\mu(D_2)\cdots\mu(D_e)|
\prod_{\substack{1 \le t \le v\\t\ne r,~t \ne s}} \frac{1}{\lcm(D_{t_1},\ldots,D_{t_m})}
\\\cdot
\frac{|\mu(N'_2)\cdots \mu(N'_e)|}{\lcm(N'_{\alpha_1},\ldots,N'_{\alpha_k})\lcm(N'_{\beta_1},\ldots,N'_{\beta_l})} \prod_{\substack{1 \le t \le v\\t\ne r,~t \ne s}} \frac{1}{\lcm(N'_{t_1},\ldots,N'_{t_m})}
\end{multline*}
Consider the graph $G'=(V,E')$ obtained from $G$ by removing the edge $e_1=\{r,s\}$.
Then $(N'_2,\dots, N'_e)$ is an edge labeling for $G'$ and
$M'_t=\lcm(N'_{t_1},\ldots,N'_{t_m})$ for any vertex $t\notin\{r,s\}$ of $G'$, while
$M'_r=\lcm(N'_{\alpha_1},\ldots,N'_{\alpha_k})$ and $M'_s=
\lcm(N'_{\beta_1},\ldots,N'_{\beta_l})$. The above relation can be written as
\begin{multline*}
\frac{|\mu(N_1)\cdots \mu(N_e)|}{M_1\cdots M_v}
\vartriangleleft
\frac{|\mu(N_1)|}{N_1^2}|\mu(D_2)\cdots\mu(D_e)|
\prod_{\substack{1 \le t \le v\\t\ne r,~t \ne s}} \frac{1}{\lcm(D_{t_1},\ldots,D_{t_m})}
\cdot\\\cdot
\frac{|\mu(N'_2)\cdots \mu(N'_e)|}{M'_1\cdots M'_v}.
\end{multline*}
Recall that $D_j$ are divisors of $N_1$. The above implies that
$$W\vartriangleleft W_1 W_2 ,$$
where we define
\begin{align*}
W_1&=\sum_{\deg(N_1)>n}\frac{|\mu(N_1)|}{N_1^2}\sum_{\substack{(D_2,\dots,D_e)\\D_j\mid N_1}}|\mu(D_2)\cdots\mu(D_e)|
\prod_{\substack{1 \le t \le v\\t\ne r,~t \ne s}} \frac{1}{\lcm(D_{t_1},\ldots,D_{t_m})},\\
W_2&=\sum_{(N'_2,\dots, N'_e)}\frac{|\mu(N'_2)\cdots \mu(N'_e)|}{M'_1\cdots M'_v}.
\end{align*}
From the data of $N_1$, $(D_2,\dots,D_e)$ and $(N'_2,\dots, N'_e)$ we
reconstruct uniquely the edge labeling $(N_1,\dots, N_e)$ by the equations
\eqref{E:detail}.
Lemma \ref{T:coef} applied to the graph $G'$ gives us that  $\cN(W_2)=\rho_{G',q}^+$
is a finite constant.

Therefore we need to bound  the norm $\cN(W_1)$ because $W\vartriangleleft W_1W_2$
implies
$$\cN(W)\le \cN(W_1)\cN(W_2).$$
Let $G'$ be the graph $G$ with the edge $\{r,s\}$ removed. Then $(D_2,\dots, D_e)$ is
a edge labeling of $G'$ and $\lcm(D_{t_1},\ldots,D_{t_m})$  are the polynomials
of the corresponding vertex labeling. Therefore, with the notations of Definition
\ref{D:frs}, we have
$$W_1=\sum_{\deg(N)>n}\frac{|\mu(N)|}{N^2}f_{r,s}(N),$$
where $f_{r,s}$ is the function associated to the graph $G'$ and the pair of
vertices, not forming an edge in $G'$, $\{r,s\}$. By Lemma \ref{C:frspol}, for each
irreducible polynomial $P$,
the norm of $f_{r,s}(P)$ is less than or equal $2^{e-1}$ because $e-1$ is the number
of edges of the graph $G'$. Hence if $N$ is squarefree with $\omega(N)$ irreducible
factors, we have $\cN(f_{rs}(N))\le 2^{e\,\omega(N)}$.
It follows that the norm of $W_1$ is less than or equal to
$$\cN\(\sum_{\deg(N)>n}\frac{\mu(N)}{N^2}2^{e\,\omega(N)}\).$$
It will now suffice to calculate a suitable upper bound on this norm.

In fact we will show the following.
\begin{lem}\label{P:thebound}
Given $0<\varepsilon<1/2$ and a natural number $a$ there is an $n_0=n_0(\varepsilon, a)$ such that
\begin{equation}\label{E:thebound}
\cN\Bigl(\sum_{\deg(Q)>n}\frac{|\mu(Q)|2^{a\omega(Q)}}{Q^2}\Bigr)\le 3q^{-(1-\varepsilon)n},
\qquad  n\ge n_0.
\end{equation}
\end{lem}

\begin{proof}
Joining the terms with $\deg(Q)=m$ and applying  \eqref{E:ogbn},  the norm $\cN$ in \eqref{E:thebound} is bounded by
$$\cN\le \sum_{m=n+1}^\infty q^m \frac{1}{q^{2m}}
\exp\Bigl(a(\log 2) 4\frac{m}{\log m}\log q\Bigr).$$

Taking $n_0=n_0(\varepsilon,a)$ we  will have
$$4\frac{a\log 2}{\log m}<\varepsilon,\qquad \text{ for } m\ge n_0.$$
Therefore we have  for $n\ge n_0$,
$$\cN\le \sum_{m=n+1}^\infty  \frac{1}{q^{m}} q^{\varepsilon m}=
\frac{1}{q^{(1-\varepsilon)n}}\frac{1}{q^{1-\varepsilon}-1}.$$
Since $q\ge2$ and $\varepsilon<1/2$, the term $\frac{1}{q^{1-\varepsilon}-1}\le \frac{1}{2^{1/2}-1}<3$ is
bounded by an absolute constant. Thus,
$$\cN\Bigl(\sum_{\deg(Q)>n}\frac{|\mu(Q)|2^{a\omega(Q)}}{Q^2}\Bigr)\le 3q^{-(1-\varepsilon)n},$$
which concludes the proof of Lemma \ref{P:thebound}.
\end{proof}
This concludes the proof of the upper bound on $T$.
\end{proof}
\section{Proof of the main theorem}
We can now prove the main theorem.
\begin{proof}
In \eqref{E:second} we obtained
$$g(n)=\frac{\rho_{G,q}}{(q-1)^v}q^{nv}-T+\sum_{k=1}^v R_k.$$
In \eqref{RkB} we showed that we have
$$\Bigl|\sum_{k=1}^vR_k\Bigr|\le \exp(d) 2^{2^e}v n^d\,\Bigl(\frac{q^n}{q-1}\Bigr)^{v-1},$$
where $d$ is the maximum degree of the vertices of $G$.
And in \eqref{E:TB} we have shown that given $0<\varepsilon<\frac12$ we have
for $n\ge n_0(\varepsilon, e)$
$$|T|\le 3\sum_{j=1}^e\rho_{G_j',q}^+ \Bigl(\frac{q^n}{q-1}\Bigr)^{v}  q^{-(1-\varepsilon)n}.$$
Hence
$$g(n)=\frac{\rho_{G,q}}{(q-1)^v}q^{nv}\Bigl(1-\frac{(q-1)^v}{q^{nv}\rho_{G,q}}T+
\frac{(q-1)^v}{q^{nv}\rho_{G,q}}\sum_{k=1}^v R_k\Bigr).$$
Since
$$\Bigl|\frac{(q-1)^v}{q^{nv}\rho_{G,q}}T\Bigr|\le \frac{3}{\rho_{G,q}}\Bigl(\sum_{j=1}^e \rho_{G'_j,q}^+\Bigr) q^{-(1-\varepsilon)n}, \qquad n\ge n_0(\varepsilon,e)$$
and
$$\Bigl|\frac{(q-1)^v}{q^{nv}\rho_{G,q}}\sum_{k=1}^v R_k\Bigr|\le \frac{\exp(d) 2^{2^e}(q-1)v}{\rho_{G,q}}n^dq^{-n},$$
we obtain
$$g(n)=\frac{\rho_{G,q}}{(q-1)^v}q^{nv}L,
$$
where
$$L=1+\frac{\exp(d) 2^{2^e}(q-1)v}{\rho_{G,q}} O(n^d q^{-n})
+\frac{3}{\rho_{G,q}}\Bigl(\sum_{j=1}^e \rho_{G'_j,q}^+\Bigr)O(q^{-(1-\varepsilon)n}),
$$
and the constants in the $O$ symbols are absolute in both cases.

We can also write this in the simplified form
$$g(n)=\frac{\rho_{G,q}}{(q-1)^v}q^{nv}\Bigl(1+ O_{G,q}(n^d q^{-n})
+O_{G,q}(q^{-(1-\varepsilon)n})\Bigr).
$$
\end{proof}

\section{Acknowledgment}
The authors thanks Igor Shparlinski for pointing out that \cite{Ari} could be adapted for tuples of monic polynomials in finite fields.
\makeatletter
\renewcommand{\@biblabel}[1]{[#1]\hfill}
\makeatother


\begin{thebibliography}{9}

\bibitem{Ari}
J.~Arias de Reyna and R.~Heyman,
`Counting tuples restricted by pairwise coprimality conditions',
{\text J. Integer Seq.\/}, \textbf{18} ~(2015), 15.10.4.
%
\bibitem{Arm}
M.~Garc\'\i a-Armas, S.~R.~Ghorpade and S.~Ram,
`Relatively prime polynomials and nonsingular Hankel matrices over finite fields',
{\text J. Combin. Theory Ser. A\/}, \textbf{118} ~(2011), 819--828.
%
\bibitem{Ben}
A.~T.~Benjamin and C.~D.~Bennett,
`The probability of relatively prime polynomials',
{\text Math. Mag.\/}, \textbf{80}~(2007), 196--202.
%
\bibitem{Cor} S.~Corteel, C.~D.~Savage, H.~S.~Wilf and D.~Zeilberger,
`A pentagonal number sieve',
{\text J. Combin. Theory Ser. A\/}, \textbf{82} (1998),   186--192.
%
%
\bibitem{Hou}
X.~Hou and G.~L.~Mullen,
`Number of irreducible polynomials and pairs of relatively prime polynomials in several variables over finite fields',
{\text Finite Fields Appl.\/} \textbf{15}~(2009), 304--331.
%
\bibitem{Knu} D.~E.~Knuth,
\emph{The Art of computer programming, Vol 2. Seminumerical algorithms (3rd Edition)},
Addison Wesley, Boston, 1998.
%
\bibitem{LiNi} R.~Lidl and H.~Niederreiter,
\emph{Finite fields}, Addison-Wesley 1983.
%
\bibitem{Mic}
G.~Micheli and R.~Schnyder,
`On the density of coprime $m$-tuples over holomorphy rings',
Preprint, 2017, available from arXiv/1411.6876 [math.NT]].
%
\bibitem{Na} W.~Narkiewicz,
\emph{Elementary and Analytic Theory of Algebraic Numbers}, Springer, $3^{\text{rd}}$ Ed.,
2004.
%
\bibitem{Rei} A.~Reifegerste,
`On an involution concerning pairs of polynomials over $\mathbb{F}_2$',
{\text J.~Combin. theory Ser. A\/}, \textbf{90} (2000), 216--220.
%
\bibitem{Sto} J.~J.~Stone,
`Multiple-burst error correction with the Chinese remainder theorem',
{\text J. Soc. Indust. Appl. Math.\/}, \textbf{11} (1963), 74--81.

\end{thebibliography}
\end{document}